\theoremstyle{definition}
\theoremstyle{theorem}
\newtheorem{theorem}[equation]{Theorem}
\newtheorem{main theorem}[equation]{Main Theorem}
\newtheorem{corollary}[equation]{Corollary}
\newtheorem{lemma}[equation]{Lemma}
\newtheorem{theorem*}{Theorem}
\newtheorem{corollary*}[theorem*]{Corollary}
\newtheorem{conjecture*}[theorem*]{Conjecture}
\theoremstyle{definition}
\newtheorem{example}[equation]{Example}
\newtheorem{definition}[equation]{Definition}
\newtheorem{definition-lemma}[equation]{Definition-Lemma}
\newtheorem*{definition*}{Definition}
\newtheorem{remark}[equation]{Remark}
\numberwithin{equation}{section}
\newcommand\R{\mathbb{R}}
\newcommand\Q{\mathbb{Q}}
\newcommand\Z{\mathbb{Z}}
\newcommand\C{\mathbb{C}}
\renewcommand\P{\mathbb{P}}
\newcommand\eps{\varepsilon}
\renewcommand\epsilon{\varepsilon}
\newcommand{\mc}{\mathcal}
\DeclareMathOperator{\codim}{codim}
\DeclareMathOperator{\im}{Im}
\DeclareMathOperator{\ord}{ord}
\DeclareMathOperator{\vol}{vol}
\DeclareMathOperator{\var}{Var}
\DeclareMathOperator{\rank}{rank}
\DeclareMathOperator{\Spec}{Spec}
\newcommand{\bm}{\mathbf B_-}  
\newcommand{\bp}{\mathbf B_+}  
\newcommand{\okbd}{\Delta}
\begin{document}

\title[A product formula for volumes of divisors via Okounkov bodies]{A product formula for volumes of divisors \\via Okounkov bodies}

\author{Sung Rak Choi} \address{Department of Mathematics, Yonsei University, Seoul, Korea} \email{sungrakc@yonsei.ac.kr}

\author{Seung-Jo Jung} \address{Department of Mathematical Sciences, Seoul National University, Seoul, Korea} \email{toyul419@snu.ac.kr}

\author{Jinhyung Park} \address{Department of Mathematics, Sogang University, Seoul, Korea} \email{parkjh13@sogang.ac.kr}

\author{Joonyeong Won} \address{School of Mathematics, Korea Institute for Advanced Study, Seoul, Korea} \email{leonwon@kias.re.kr}

\subjclass[2010]{14C20, 14D06}
\date{\today}
\keywords{Okounkov body, divisor, volume, algebraic fiber space}
\thanks{S. Choi and J. Park were partially supported by NRF-2016R1C1B2011446. S.-J. Jung was partially supported by NRF-2017R1C1B1005166.}

\begin{abstract}
We generalize Kawamata's product formula for volumes of canonical divisors to arbitrary divisors using Okounkov bodies.
\end{abstract}

\maketitle


\section{Introduction}
The \emph{volume} of a $\Q$-divisor $D$ on an $n$-dimensional smooth projective variety $X$ is defined as
$$
\vol_X(D):= \limsup_{m \to \infty} \frac{h^0(X, \mathcal{O}_X(\lfloor mD \rfloor))}{m^{n}/n !}
$$
and the \emph{restricted volume} of $D$ on $X$ along a subvariety $V \subseteq X$ is defined as
$$
\vol_{X|V}(D):=\limsup_{m \to \infty} \frac{\dim \im\Big[H^0(X, \mathcal{O}_X(\lfloor mD \rfloor)) \to H^0(V, \mathcal{O}_X(\lfloor mD \rfloor)|_V)\Big]}{m^{\dim V}/\dim V!}.
$$
When $V \not\subseteq \bm(D)$, the \emph{augmented restricted volume} of $D$ along a subvariety $V \subseteq X$ is defined as
$$
\vol_{X|V}^+(D):=\lim_{\eps \to 0} \vol_{X|V}(D+\eps A)
$$
where $A$ is an ample $\Q$-divisor on $X$. It is independent of the choice of $A$.
We refer to~\cite{CHPW-okbd I} and~\cite{elmnp-restricted vol and base loci} for more details.

The main result of this article is the following.

\begin{theorem}\label{main1}
Let $f \colon X \to Y$ be a surjective morphism with connected fibers between smooth projective varieties and $F$ a general fiber of $f$.
Consider a big divisor $D$ on $X$ such that $D \sim_{\Q} f^*D_Y + R$ for some big divisor $D_Y$ on $Y$ and an effective divisor $R$ on $X$. Then we have
$$
\frac{\vol_X(D)}{\dim X !} \geq \frac{\vol_Y (D_Y)}{\dim Y !} \cdot \frac{\vol_{X|F}^+(R)}{\dim F!}.
$$
Suppose furthermore that the coherent sheaf $f_*\mathcal{O}_X(mR)$ is weakly positive for every sufficiently divisible integer $m>0$. Then we have
$$
\frac{\vol_X(D)}{\dim X !} \geq \frac{\vol_Y (D_Y)}{\dim Y !} \cdot \frac{\vol_{F}(R|_F)}{\dim F!}.
$$
\end{theorem}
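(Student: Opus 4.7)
The plan is to realize the volume inequality as a volume comparison of Newton--Okounkov bodies for an admissible flag on $X$ adapted to $f$. Write $r = \dim Y$ and $n = \dim X$. I would first fix an admissible flag $Z_\bullet : Y = Z_0 \supset \cdots \supset Z_r$ through a very general point, then construct an admissible flag $Y_\bullet : X = Y_0 \supset \cdots \supset Y_n$ with $Y_i = f^{-1}(Z_i)$ for $0 \le i \le r$ (in particular $Y_r = F$) and tail $(Y_r, \dots, Y_n)$ admissible on $F$; a general choice of $Z_\bullet$ and a general fiber $F$, possibly after a birational modification of $X$, makes this work.

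The heart of the proof is a valuation-splitting lemma for $\nu := \nu_{Y_\bullet}$. For $s \in H^0(Y, mD_Y)$, taking local equations of $Y_i$ ($i \le r$) to be pullbacks of those for $Z_i$ gives $\nu(f^*s) = (\nu_{Z_\bullet}(s),\, \mathbf 0) \in \Z_{\ge 0}^r \times \Z_{\ge 0}^{n-r}$, because the residue on $F$ is the pullback of a nonzero constant. For $t \in H^0(X, mL)$ with $t|_F \ne 0$, the inclusions $F \subseteq Y_i$ for $i \le r$ inductively force $\ord_{Y_i}(t|_{Y_{i-1}}) = 0$, whence $\nu(t) = (\mathbf 0,\, \nu_{Y_\bullet|_F}(t|_F))$. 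Fix an ample $\Q$-divisor $A$ on $X$, let $\eps > 0$, and take any $s \in H^0(Y, mD_Y)$ and $t \in H^0(X, m(R+\eps A))$ with $t|_F \ne 0$. Then $f^*s \cdot t \in H^0(X, m(D+\eps A))$ has valuation $\bigl(\nu_{Z_\bullet}(s),\, \nu_{Y_\bullet|_F}(t|_F)\bigr)$, so density of rational valuation points from sections in both $\okbd_{Z_\bullet}(D_Y)$ and the restricted body $\okbd_{X|F}(R+\eps A)$ yields the product inclusion
$$
\okbd_{Z_\bullet}(D_Y) \times \okbd_{X|F}(R+\eps A) \ \subseteq\ \okbd_{Y_\bullet}(D+\eps A).
$$
Taking Lebesgue volumes and normalising by the usual factorials gives
$$
\frac{\vol_X(D+\eps A)}{n!} \ \ge\ \frac{\vol_Y(D_Y)}{r!} \cdot \frac{\vol_{X|F}(R+\eps A)}{(n-r)!},
$$
and letting $\eps \to 0$ using continuity of $\vol_X$ on the big cone together with the definition of $\vol^+_{X|F}$ produces the first inequality.

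For the second inequality, the weak positivity of $f_*\mc O_X(mR)$ upgrades $\vol^+_{X|F}(R)$ to $\vol_F(R|_F)$: it implies that for every ample $H$ on $Y$ and $a \gg 0$, $\widehat{S^a f_*\mc O_X(mR)} \otimes \mc O_Y(H)$ is generically globally generated, so in the limit global sections of $m(R+\eps f^*H)$ on $X$ surject onto $H^0(F, mR|_F)$, forcing $\vol^+_{X|F}(R) \ge \vol_F(R|_F)$ (the reverse inequality being automatic). Substituting into the first inequality yields the second. The main obstacle is the product inclusion itself: beyond the two valuation computations, one must arrange the flag so that vertical steps are literally pulled back from $Z_\bullet$ (likely only possible on a birational model of $X$, at which point birational invariance of volumes must be invoked), and one must ensure that rational points of the restricted body come from sections of $R+\eps A$ nonvanishing on $F$ so that the splitting applies. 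The passage $\eps \to 0$ and the weak positivity upgrade are routine once the product inclusion is in hand.
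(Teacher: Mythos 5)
Your overall strategy matches the paper's: build a fiber-type admissible flag on $X$ from flags on $Y$ and $F$, establish a product inclusion of Okounkov bodies (via valuation splitting of sections in your writeup; via Minkowski additivity of bodies together with the identity $\okbd_{X_\bullet}(f^*D_Y)=\okbd_{Y_\bullet}(D_Y)\times\{0\}^{\dim F}$ in the paper's), take Euclidean volumes and pass $\eps\to 0$ for the first inequality, then invoke weak positivity for the second. Your valuation computation and the resulting product inclusion are correct. Your precaution about needing a birational modification of $X$ is unnecessary: for an admissible flag $Y_\bullet$ on $Y$ through a general point, each $f^{-1}(Y_i)$ is already irreducible of codimension $i$ and smooth at the final point since $F$ is general, so the fiber-type flag is admissible on $X$ itself, as the paper takes it.

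The genuine gap is in the weak positivity upgrade, which you label as routine but which contains a false step as stated. Generic global generation of $\hat{S}^{ak}\big(f_*\mathcal O_X(m_1R)\big)\otimes\mathcal O_Y(kH)$ does \emph{not} make $H^0(X,m(R+\eps f^*H))$ surject onto $H^0(F,mR|_F)$; it only produces global sections on $X$ whose restrictions span the image of the multiplication map $S^{ak}H^0(F,m_1R|_F)\to H^0(F,akm_1R|_F)$, which is in general a proper subspace. To recover the full volume $\vol_F(R|_F)$ one must insert Fujita approximation: replace $R|_F$ by a very ample $L'$ on a modification $g\colon\widetilde F\to F$ with $g^*(R|_F)-L'$ effective and $\vol_{\widetilde F}(L')>\vol_F(R|_F)-\epsilon$, so that the multiplication maps $S^{m_0}S^k H^0(\widetilde F,L)\to H^0(\widetilde F,m_0kL)$ are onto and a rank count gives the volume bound. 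A second omitted technicality is that $\hat{S}^{ak}(\mathcal F)$ differs from $S^{ak}(\mathcal F)$ on a codimension-$\geq 2$ subset of $Y$ whose preimage in $X$ may be a divisor; to relate sections of the reflexive hull to $H^0(X,\cdot)$ one passes to a Viehweg flattening $f'\colon X'\to Y'$ (where every divisor with $\codim f'(\cdot)\geq 2$ is $\tau'$-exceptional) and uses that $(f'_*\mathcal O_{X'}(L))^{\vee\vee}=f'_*\mathcal O_{X'}(L+B)$ for a $\tau'$-exceptional $B$ (the paper's Lemma 2.8). These two devices constitute the bulk of the paper's Lemma 3.1 and are needed to make your plan close.
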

Let us recall the definition of weak positivity, which is the key condition in Theorem \ref{main1}.

\begin{definition*}[{\cite[Definition 1.2]{V}}]
Let $\mathcal{F}$ be a coherent torsion-free sheaf on a smooth projective variety $X$.
For an open subvariety $U \subseteq X$, we say that $\mathcal{F}$ is \emph{weakly positive over $U$} if for every ample Cartier divisor $H$ on $X$ and every positive integer $m>0$, there exists a positive integer $k>0$ such that
\[
H^0(X,\hat{S}^{mk}(\mathcal{F})\otimes \mathcal{O}_X(kH))\otimes \mathcal{O}_X\to \hat{S}^{mk}(\mathcal{F})\otimes \mathcal{O}_X(kH)
\]
is surjective at each point in $U$, where $\hat{S}^{mk}(\mathcal{F}):=\big(S^{mk}(\mathcal{F})\big)^{\vee\vee}$ is the double dual of the sheaf $S^{mk}(\mathcal{F})$.
We say that $\mathcal{F}$ is \emph{weakly positive} if there exists an open subvariety $U$ such that $\mathcal{F}$ is weakly positive over $U$.
\end{definition*}

\medskip

The last inequality in Theorem \ref{main1} improves the first one since $\vol_F(R|_F)\geq\vol_{X|F}^+(R)$ holds in general. Note also that by letting $D=K_X$ in the second inequality of Theorem \ref{main1}, we recover Kawamata's theorem~\cite[Theorem 0.1]{K}, thereby obtaining a generalization.
Recall that $f_* \omega_{X/Y}^{\otimes m}$ is weakly positive for every integer $m>0$ by Viehweg \cite[Theorem III]{V}.

\begin{corollary}\label{kawamata}
Let $f \colon X \to Y$ be a surjective morphism with connected fibers between smooth projective varieties and $F$ a general fiber of $f$.
Then we have
$$
\frac{\vol_X(K_X)}{\dim X !} \geq \frac{\vol_Y (K_Y)}{\dim Y !} \cdot \frac{\vol_{F}(K_F)}{\dim F!}.
$$
\end{corollary}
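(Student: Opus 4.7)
The plan is to deduce Corollary~\ref{kawamata} from Theorem~\ref{main1} applied to a suitable multiple of $K_X$, using Viehweg's weak positivity to manufacture the required effective decomposition. First, I would reduce to the case when both $K_Y$ and $K_F$ are big, as otherwise the right-hand side vanishes and there is nothing to prove. Under this assumption $K_X$ is also big, by Viehweg's theorem on the additivity of the Kodaira dimension for a fibration whose base is of general type, so Theorem~\ref{main1} is applicable to multiples of $K_X$.

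Next, fix an ample divisor $A$ on $Y$ and an integer $m$ large enough that $mK_Y - A$ is still big. The key step is to use the weak positivity of $f_*\omega_{X/Y}^{\otimes m}$ from~\cite[Theorem III]{V} to produce, for some $k>0$, a non-zero global section of $\hat{S}^{k}\bigl(f_*\omega_{X/Y}^{\otimes m}\bigr) \otimes \mathcal{O}_Y(kA)$; composing with the natural multiplication map and identifying its target with $f_*\mathcal{O}_X\bigl(kmK_{X/Y} + kf^*A\bigr)$ via the projection formula yields an effective divisor $E$ on $X$ with $E \sim kmK_{X/Y} + kf^*A$. Combined with $K_X = f^*K_Y + K_{X/Y}$ this rewrites as
$$
kmK_X \;\sim\; f^*\bigl(k(mK_Y - A)\bigr) + E,
$$
expressing $kmK_X$ as the pullback of a big divisor plus an effective divisor on $X$.

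To invoke the second inequality of Theorem~\ref{main1}, I would verify weak positivity of $f_*\mathcal{O}_X(jE)$: by the projection formula,
$$
f_*\mathcal{O}_X(jE) \;\cong\; f_*\omega_{X/Y}^{\otimes jkm}\otimes \mathcal{O}_Y(jkA),
$$
which is weakly positive as the tensor product of a weakly positive sheaf (Viehweg) with an ample line bundle. Theorem~\ref{main1} then yields
$$
\frac{\vol_X(kmK_X)}{\dim X!} \;\geq\; \frac{\vol_Y\bigl(k(mK_Y - A)\bigr)}{\dim Y!}\cdot\frac{\vol_F(kmK_F)}{\dim F!},
$$
where I have used $E|_F \sim kmK_F$ (since $f^*A$ restricts trivially to a fiber). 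The degree-$\dim$ homogeneity of the volume function together with $\dim X = \dim Y + \dim F$ causes the factors of $k$ and $m$ to cancel, leaving
$$
\frac{\vol_X(K_X)}{\dim X!} \;\geq\; \frac{\vol_Y\bigl(K_Y - \tfrac{1}{m}A\bigr)}{\dim Y!}\cdot\frac{\vol_F(K_F)}{\dim F!};
$$
letting $m\to\infty$ and using continuity of the volume function on the big cone then finishes the argument.

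The main technical obstacle is the extraction of the effective divisor $E \sim kmK_{X/Y} + kf^*A$ from Viehweg's weak positivity with the correct line bundle class on $X$; once this is done, the remainder is a formal manipulation of volumes together with Theorem~\ref{main1}.
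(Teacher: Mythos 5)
Your plan --- apply Theorem~\ref{main1} to $kmK_X \sim f^*\big(k(mK_Y-A)\big) + E$, use homogeneity of volume, and let $m\to\infty$ --- is sound in its bookkeeping, but the step that extracts the effective divisor $E$ is where the argument breaks, and you flag it yourself as ``the main technical obstacle'' without resolving it. The natural multiplication map out of $\hat{S}^k\big(f_*\omega_{X/Y}^{\otimes m}\big)$ does not have target $f_*\omega_{X/Y}^{\otimes km}$: since $\hat S^k=(S^k)^{\vee\vee}$, it factors through the reflexive hull $\big(f_*\omega_{X/Y}^{\otimes km}\big)^{\vee\vee}$. A global section of $\hat{S}^k\big(f_*\omega_{X/Y}^{\otimes m}\big)\otimes\mathcal{O}_Y(kA)$ therefore yields a section of $\big(f_*\omega_{X/Y}^{\otimes km}\big)^{\vee\vee}\otimes\mathcal{O}_Y(kA)$, and by Lemma~\ref{lem-eff div exists} this sheaf is $f_*\mathcal{O}_X\big(kmK_{X/Y}+kf^*A+B\big)$ for some effective $B$ with $\codim_Y f(B)\geq 2$. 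So what you actually produce is an effective divisor in $|kmK_{X/Y}+kf^*A+B|$, not in $|kmK_{X/Y}+kf^*A|$. For a torsion-free sheaf $\mathcal{G}$ the inclusion $H^0(Y,\mathcal{G})\hookrightarrow H^0(Y,\mathcal{G}^{\vee\vee})$ need not be an equality, and since $f$ need not be equidimensional the preimage of the non-reflexivity locus can be a divisor on $X$, so $B$ cannot be discarded for free. Removing it requires passing to a birational model $f'\colon X'\to Y'$ on which every divisor with small base image is exceptional over $X$ --- this is exactly the first half of the proof of Lemma~\ref{weakpos}, and it is not a formality.

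Even if that gap were repaired, the route you chose is a detour. The paper's proof is one step: by Theorem~\ref{var} with $D_Y=0$, $\kappa(K_{X/Y})\geq\kappa(K_F)+\var(f)\geq 0$ once $F$ is of general type, so there is an effective $\Q$-divisor $R\sim_{\Q}K_{X/Y}$; then $R|_F\sim_{\Q}K_F$ by adjunction, $f_*\mathcal{O}_X(mR)\cong f_*\omega_{X/Y}^{\otimes m}$ is weakly positive for sufficiently divisible $m$ by \cite[Theorem III]{V}, and Theorem~\ref{main1} applies directly with $D=K_X$, $D_Y=K_Y$, with no ample perturbation and no limit in $m$. Your limiting argument is in effect re-deriving a special case of Lemma~\ref{weakpos} by hand, when that lemma is already packaged inside Theorem~\ref{main1}; invoking Theorem~\ref{var} for the $\Q$-effectivity of $K_{X/Y}$ bypasses the whole difficulty.
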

\smallskip

Note that this inequality holds trivially if $Y$ or $F$ is not of general type.

Recall the conjecture on the subadditivity property for logarithmic Kodaira dimension considered in
\cite[Conjecture 1.2]{Fujino}: for an algebraic fiber space $f \colon X \to Y$ with a sufficiently general fiber $F$, if $D_X$ on $X$ and $D_Y$ on $Y$ are simple normal crossing effective divisors satisfying $\text{Supp} f^*D_Y \subseteq \text{Supp} D_X$, then
$$
\kappa(K_X+D_X) \geq \kappa(K_Y + D_Y) + \kappa(K_F + D_X|_F).
$$
It is tempting to hope that under the same assumptions, Corollary \ref{kawamata} holds, i.e.,
$$
\frac{\vol_X (K_X+D_X)}{\dim X!}\geq\frac{\vol_Y (K_Y+D_Y)}{\dim Y!}\cdot\frac{\vol_F (K_F + D_X|_F)}{\dim F!}.
$$
However, this expectation is far from the truth. Example~\ref{ex0} shows that this expectation is not true even if $(X, D_X)$ and $(Y, D_Y)$ are klt pairs of log general type. See also Remark \ref{Iitaka} for more discussion.

Ever since the introduction of Okounkov bodies in \cite{O1} and \cite{O2} and the pioneering work by  Lazarsfeld--Musta\c{t}\u{a}~\cite{lm-nobody} and Kaveh--Khovanskii~\cite{KK}, a remarkable progress has been made.
Due to the recent results, Okounkov bodies provide a rich and useful point of view on the positivity of divisors (see e.g., \cite{CHPW-asyba}, \cite{KL1}, \cite{KL2}, \cite{Roe}).
The \emph{Okounkov body} $\okbd_{X_\bullet}(D)$ associated to a divisor $D$ on a smooth projective variety $X$ is a closed and bounded convex subset of the Euclidean space $\R^{n}$ where $n=\dim X$. The delicate dependence of $\okbd_{X_\bullet}(D)$ on the admissible flags $X_\bullet$ makes the theory more interesting and deeper. By definition, an admissible flag $X_\bullet$ on $X$ is a sequence of subvarieties
$$
X_\bullet : X=X_0 \supseteq X_1 \supseteq \cdots \supseteq X_{n - 1} \supseteq X_{n}=\{x\}
$$
where each $X_i$ is an irreducible subvariety of codimension $i$ in $X$ that is smooth at the point $x$. See Section \ref{prelimsec} for the precise construction of $\okbd_{X_\bullet}(D)$.

As already noted in~\cite{lm-nobody} and many other recent results, the Okounkov body renders many of the basic properties of asymptotic invariants of divisors in a transparent manner.
One of the first fundamental results in~\cite{lm-nobody} and \cite{KK} states that if $D$ is a big divisor, then for any admissible flag $X_\bullet$ on $X$, we have
\begin{equation}\label{eq.volume formula}
\vol_{\R^{n}}(\okbd_{X_\bullet}(D)) = \frac{1}{n!} \vol_X(D)
\end{equation}
where $\vol_{\R^{n}}$ denotes the Euclidean volume.
This result has been extended to pseudoeffective divisors in~\cite{CHPW-okbd I}.
The results of~\cite{lm-nobody} and \cite{KK} have also inspired a number of other extensions which allowed us to understand various positivity properties of divisors in terms of Okounkov bodies.
By~(\ref{eq.volume formula}), the second inequality in Theorem~\ref{main1} can be interpreted as
$$
\begin{array}{c}
\vol_{\R^{\dim X}}(\okbd_{X_\bullet}(D))\geq \vol_{\R^{\dim Y}}\okbd_{Y_\bullet}(D_Y)\cdot\vol_{\R^{\dim F}}\okbd_{F_\bullet}(R|_F)
\end{array}
$$
for any admissible flags $X_\bullet, Y_\bullet, F_\bullet$ on $X, Y, F$, respectively.

The proof of Theorem~\ref{main1} relies on the analysis of the Okounkov bodies with respect to carefully chosen admissible flags.
Let $Y_\bullet$ be an admissible flag on $Y$ such that $Y_{\dim Y}$ is a general point of $Y$ and $F_{\bullet}$ an admissible flag on the general fiber $F=f^{-1}(Y_{\dim Y})$. Then we can define a \emph{fiber-type admissible flag $X_\bullet$ associated to $Y_\bullet$ and $ F_\bullet$} as an admissible flag on $X$ such that
\[
X_i:=
\begin{cases}
f^{-1}(Y_i) &\text{if $0 \leq i\leq \dim Y$,}\\
F_{i-\dim Y} &\text{if $\dim Y < i\leq \dim X$.}\\
\end{cases}
\]
Since $F$ is a general fiber, it is easy to check that $X_\bullet$ is an admissible flag on $X$.
For $\eps > 0$, let $W_\bullet^{\eps}=W_\bullet(R+\eps A|F)$ be the graded complete linear series of $R+\eps A$ on $X$ restricted to $F$ where $A$ is an ample divisor on $X$.
More precisely, $W_0=\C$ and each $W_m^{\eps}$ for $m\geq 1$ is given by
$$
W_m^{\eps}:=W_m(R+\eps A|F)= \im\big[H^0\big(X, \mathcal{O}_X(\lfloor m(R+\eps A) \rfloor)\big) \to H^0\big(F, \mathcal{O}_X(\lfloor m(R+\eps A) \rfloor)|_F\big)\big].
$$
First, we will observe that
$$
\okbd_{X_\bullet}(D) \supseteq \okbd_{Y_\bullet}(D_Y) \times \bigcap_{\eps > 0} \okbd_{F_\bullet}(W_\bullet^{\eps})
$$
where $\okbd_{Y_\bullet}(D_Y)$ is defined in $\R^{\dim Y}$ and $\okbd_{F_\bullet}(W_\bullet^{\eps})$ is defined in $\R^{\dim F}$ for the decomposition $\R^{\dim X}=\R^{\dim Y}\times\R^{\dim F}$.
We also remark that once we identify $\okbd_{Y_\bullet}(D_Y)$ and $\okbd_{F_\bullet}(W_\bullet^{\eps})$
with $\okbd_{Y_\bullet}(D_Y)\times\{0\}^{\dim F}$ and $\{0\}^{\dim Y}\times\okbd_{F_\bullet}(W_\bullet^{\eps})$, respectively, we can express
$$\okbd_{Y_\bullet}(D_Y)\times\okbd_{F_\bullet}(W_\bullet^{\eps})=\okbd_{Y_\bullet}(D_Y)\times\{0\}^{\dim F}+\{0\}^{\dim Y}\times\okbd_{F_\bullet}(W_\bullet^{\eps})$$
using the Minkowski sum $`+$'.
Note that the above inclusion is an equality  precisely if the Okounkov body $\okbd_{X_\bullet}(D)$  has the shape of a \emph{box}
$$
\okbd_{Y_\bullet}(D_Y) \times \bigcap_{\eps > 0}\okbd_{F_\bullet}(W_\bullet^{\eps}) \subseteq \R^{\dim Y} \times \R^{\dim F} = \R^{\dim X}
$$
having the \emph{base} $\okbd_{Y_\bullet}(D_Y) \subseteq \R^{\dim Y}$ and the \emph{height} $\bigcap_{\eps > 0}\okbd_{F_\bullet}(W_\bullet^{\eps}) \subseteq \R^{\dim F}$.
If $f_*\mathcal{O}_X(mR)$ is weakly positive for every sufficiently divisible integer $m>0$, then  by Lemma~\ref{weakpos}, we have $\bigcap_{\eps > 0}\okbd_{F_\bullet}(W_\bullet^{\eps})=\okbd_{F_\bullet}(R|_F)$. Now, Theorem~\ref{main1} follows from~(\ref{eq.volume formula}). 
See Section \ref{pfsec} for details.

We are now interested in when the equality holds in Corollary~\ref{kawamata}.
The following result confirms Kawamata's expectation in~\cite[Remark 0.2]{K}.

\begin{theorem}\label{main2}
Let $f \colon X \to Y$ be a surjective morphism with connected fibers between smooth projective varieties and $F$ a general fiber of $f$. Suppose that both $Y$ and $F$ are varieties of general type. Then the following are equivalent:\\[-10pt]
$
\begin{array}{ll}
 \text{\begin{minipage}[t]{0,3\textwidth}$$(1)\;\frac{\vol_X(K_X)}{\dim X !} = \frac{\vol_Y (K_Y)}{\dim Y !} \cdot \frac{\vol_{F}(K_F)}{\dim F!}.$$\end{minipage}}& \\ \\[-8pt]
  $$(2)\;\text{$\okbd_{X_\bullet}(K_X)\cong\okbd_{Y_\bullet}(K_Y)\times\okbd_{F_\bullet}(K_F)$ for any fiber-type admissible flag $X_\bullet$ on $X$ associated to}$$& \\
  \phantom{(2)}\;\text{admissible flags $Y_\bullet,F_\bullet$ on $Y, F$, respectively.}$$& \\  \\[-10pt]
  $$(3)\;\text{$f$ is birationally isotrivial.}$$&
\end{array}
$
\end{theorem}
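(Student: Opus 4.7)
The plan is to establish (1) $\Leftrightarrow$ (2) by a convex-body argument from the volume formula, (3) $\Rightarrow$ (1) by reduction to the product case, and (1) $\Rightarrow$ (3) as the main substantive step, invoking Viehweg--Kawamata rigidity for families of varieties of general type.

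\emph{Step 1: (1) $\Leftrightarrow$ (2).} Applying the volume formula (\ref{eq.volume formula}) simultaneously on $X$, $Y$, and $F$, condition (1) is equivalent to the equality
$$\vol_{\R^{\dim X}}\okbd_{X_\bullet}(K_X) \;=\; \vol_{\R^{\dim Y}}\okbd_{Y_\bullet}(K_Y)\cdot \vol_{\R^{\dim F}}\okbd_{F_\bullet}(K_F).$$
The analysis preceding Theorem~\ref{main1} (applied with $D=K_X$, $D_Y=K_Y$, $R=K_{X/Y}$, and using Viehweg's weak positivity of $f_*\omega_{X/Y}^{\otimes m}$) furnishes the set-theoretic inclusion $\okbd_{X_\bullet}(K_X) \supseteq \okbd_{Y_\bullet}(K_Y)\times \okbd_{F_\bullet}(K_F)$ of compact convex bodies, valid for every fiber-type flag. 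Equal Euclidean volumes of a nested pair of compact convex bodies forces equality as sets, so (1) and (2) are equivalent.

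\emph{Step 2: (3) $\Rightarrow$ (1).} If $f$ is birationally isotrivial, choose a generically finite morphism $\tau: Y' \to Y$ together with a birational map from a resolution of the main component of $X \times_Y Y'$ to the trivial product $F \times Y'$. Volumes of canonical divisors are birational invariants and behave predictably under generically finite base change, so it suffices to verify (1) for the product fibration $F \times Y' \to Y'$. For that, K\"unneth's theorem gives $h^0(F \times Y', mK_{F \times Y'}) = h^0(F, mK_F) \cdot h^0(Y', mK_{Y'})$, and matching leading-order asymptotics in $m$ supplies (1) at once; tracing through $\tau$ yields (1) for the original $f$.

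\emph{Step 3: (1) $\Rightarrow$ (3) -- the main obstacle.} Assume (1), hence (2) by Step 1, so $\okbd_{X_\bullet}(K_X)$ is a genuine product along any fiber-type flag. Reading the inclusion in Step 1 as an equality, the asymptotic growth of $H^0(X, mK_X)$ matches that of $H^0(Y, mK_Y) \otimes H^0(F, mK_F)$, which forces the sheaves $f_*\omega_{X/Y}^{\otimes m}$ to carry no positivity beyond that of a trivial family. Combining this with Viehweg--Zuo style rigidity -- in the form that nonzero variation of a family of canonically polarized manifolds of general type produces a strictly positive (big) subsheaf of the associated Hodge bundle -- one concludes $\operatorname{Var}(f)=0$. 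Boundedness of canonical models of varieties of general type (Hacon--M\textsuperscript{c}Kernan--Xu) then upgrades $\operatorname{Var}(f)=0$ to birational isotriviality.

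The genuine difficulty lies in Step 3: converting a purely convex-geometric equality of Okounkov bodies into the moduli-theoretic vanishing $\operatorname{Var}(f)=0$. A promising route is to use the product shape of $\okbd_{X_\bullet}(K_X)$ to show that the section ring $R(X,K_X)$ agrees asymptotically with $R(Y,K_Y)\otimes R(F,K_F)$ along the fiber-type flag, and to interpret this extremal rigidity as the failure of any Higgs subsheaf of the Hodge bundle attached to $f$ to be big.
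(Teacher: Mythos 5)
Your Steps 1 and 2 are essentially correct in spirit and broadly parallel the paper's $(1)\Rightarrow(2)$ and $(3)\Rightarrow(1)$. One caution on Step~2: after passing to the generically finite cover $\tau\colon Y'\to Y$, the divisor whose volume you need is $\tau'^*K_X$, not $K_{X'}$, and these differ by the ramification divisor $R_{\tau'}$; similarly on $Y'$. The paper carefully tracks both ramification divisors (using $f'^*R_\tau\leq R_{\tau'}$) and an effective $g$-exceptional divisor $E$ to squeeze $\tau'^*K_X$ between $f'^*\tau^*K_Y+g^*p_1^*K_F$ and the same plus $E$, from which the product formula for volumes follows. Your ``K\"unneth and match leading asymptotics'' gets the right answer for the trivial product, but the reduction to that case needs the ramification bookkeeping you skipped.

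Step 3 is the genuine gap, which you acknowledge: your route via Viehweg--Zuo rigidity and bigness of Hodge subsheaves is proposed only as ``a promising route,'' not carried out, and it is not the route the paper takes. The paper's $(2)\Rightarrow(3)$ is more elementary and avoids Hodge theory entirely. It argues the contrapositive: if $\var(f)>0$, then the Kawamata--Viehweg additivity result (Theorem~\ref{var}, i.e.\ $\kappa(K_{X/Y}+f^*D_Y)\geq\kappa(K_F)+\max\{\kappa(D_Y),\var(f)\}$) gives $\kappa(K_{X/Y})>\dim F$. Then a short convex-geometric argument shows $\bigcap_{\eps>0}\okbd_{X_\bullet}(K_{X/Y}+\eps A)$ has dimension $>\dim F$: if it were contained in $\{0\}^{\dim Y}\times\R^{\dim F}$, then no effective divisor $\sim_\Q mK_{X/Y}$ could contain $F$, so $H^0(X,\mathcal{O}_X(mK_{X/Y}))\to H^0(F,\mathcal{O}_F(mK_F))$ would be injective for all $m$, forcing $\kappa(K_{X/Y})\leq\dim F$, a contradiction. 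Picking a point $\mathbf x$ in that intersection with nonzero $Y$-coordinates, the translate $\okbd_{Y_\bullet}(K_Y)+\mathbf x$ escapes the product box $\okbd_{Y_\bullet}(K_Y)\times\okbd_{F_\bullet}(K_F)$, so the inclusion $\okbd_{X_\bullet}(K_X)\supseteq\okbd_{Y_\bullet}(K_Y)\times\okbd_{F_\bullet}(K_F)$ is strict, contradicting~(2). You should replace your speculative Higgs-bundle argument with this mechanism; it needs only the $C^+_{n,m}$-type inequality already cited in the paper.
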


In the proof of Theorem \ref{main2}, we use the properties of Okounkov bodies. See Section \ref{pfsec} for details. The equivalence $(1) \Leftrightarrow (3)$ of Theorem \ref{main2} was also announced by Tsuji in~\cite[Corollary 4.7]{T}.

Example~\ref{ex1} shows that the inequality of Theorem \ref{main1} can be strict for a birationally isotrivial fibration.
On the other hand, Example \ref{ex2} shows that the inequality of Theorem \ref{main1} can be an equality for a non-birationally isotrivial fibration. From these observations, one may say that the birational isotriviality of a fibration is governed by the property of the canonical divisors, not other divisors.
These examples also show that a log version of Theorem~\ref{main2} does not hold even for klt pairs. See Section \ref{exsec} for details.

\medskip

The rest of the paper is organized as follows. We begin with Section \ref{prelimsec} to recall some basic notions.
Section~\ref{pfsec} is devoted to proving our main results, Theorem~\ref{main1}, Corollary~\ref{kawamata}, and Theorem~\ref{main2}. In Section~\ref{exsec}, we present relevant examples.

\smallskip

\subsection*{Acknowledgement}
We would like to thank Constantin Shramov for informing us of the paper~\cite{K}.
We are grateful to the referees for their careful reading of the paper and numerous suggestions for improvement.

\section{Preliminaries}\label{prelimsec}
Throughout the paper, we work over the field $\C$ of complex numbers. A divisor always means a $\Q$-Cartier $\Q$-divisor unless otherwise stated.

In this section, we recall the relevant basic notions.
Let $X$ be an $n$-dimensional projective variety. An \emph{admissible flag} $X_\bullet$ on $X$ is a sequence of irreducible subvarieties of $X$
\[
X_\bullet : X=X_0 \supseteq X_1 \supseteq \cdots \supseteq X_{n - 1} \supseteq X_{n}
\]
where each $X_i$ is of codimension $i$ in $X$ and smooth at the point $X_{n}$. For a divisor $D$ with $|D|_{\Q}=\{ D' \mid D \sim_{\Q} D' \geq 0 \}\neq \emptyset$, we define a valuation-like function
\[
\nu_{X_{\bullet}}\colon |D|_{\Q}\rightarrow \R^n_{\geq 0}
\]
associated to $X_{\bullet}$ as follows. For $D'\in |D|_{\Q}$, we first define
\[
\nu_1:=\nu_1(D'):=\ord_{X_1}(D') \quad \text{and}\quad \nu_2:=\nu_2(D'):=\ord_{X_2}\big((D'-\nu_1(D')X_1)|_{X_1}\big).
\]
For $i>2$, define $\nu_i$ inductively as
\[
\nu_i:=\nu_i(D'):=\ord_{X_i}\big(( \cdots ((D'-\nu_1 X_1)|_{X_1}-\nu_2 X_2 )|_{X_2}-\cdots-\nu_{i-1}X_{i-1})|_{X_{i-1}}\big).
\]
By collecting $\nu_i$'s for $i=1,\ldots, n$, we define $\nu_{X_{\bullet}}(D'):=(\nu_1,\ldots,\nu_n)\in \R^n_{\geq 0}$.

\begin{definition}
Let $D$ be a divisor on $X$ such that $|D|_{\mathbb Q}\neq \emptyset$. The \emph{Okounkov body $\Delta_{X_\bullet}(D)$ associated to the divisor $D$ with respect to an admissible flag $X_{\bullet}$} is defined as the closed and bounded convex subset
\[
\Delta_{X_\bullet}(D):=\text{the closure of the convex hull of $\nu_{X_{\bullet}}(|D|_{\Q})$ in $\R^n_{\geq 0}$}.
\]
\end{definition}

When $D \sim_{\Q} D'$, we have $\okbd_{X_\bullet}(D)=\okbd_{X_\bullet}(D')$ by definition. If $D, D'$ are big and $D$ is numerically equivalent to $D'$, then $\okbd_{X_\bullet}(D)=\okbd_{X_\bullet}(D')$ by \cite[Proposition 4.1]{lm-nobody}.

\begin{remark}\label{additivity}
Let $D, D'$ be divisors on $X$ such that $|D|_{\Q}\neq \emptyset, |D'|_{\Q} \neq \emptyset$. Fix an admissible flag $X_\bullet$ on $X$. By definition, we have $\nu_{X_\bullet}(D_0) + \nu_{X_\bullet}(D_0')=\nu_{X_\bullet}(D_0+D_0')$ for any $D_0 \in |D|_{\Q}, D_0' \in |D'|_{\Q}$. Using this, we can easily check the following property
$$
\okbd_{X_\bullet}(D) + \okbd_{X_\bullet}(D') \subseteq \okbd_{X_\bullet}(D+D').
$$
\end{remark}

Note that our definition of the Okounkov body is equivalent to the one in~\cite{lm-nobody} and \cite{KK} where the above function $\nu_{X_\bullet}$ is applied to the nonzero sections $s$ of each piece $H^0(X,\mc O_X(mD))$ of the graded section ring $\bigoplus_{m \geq 0} H^0(X,\mc O_X(mD))$ and the Okounkov body $\okbd_{X_\bullet}(D)$ is defined as the convex closure of the set of rescaled images $\frac{1}{m}\nu_{X_\bullet}(s)$ in $\R^n$.
This equivalent construction can be generalized to a graded linear (sub)series $W_\bullet$ given by a divisor on $X$ to construct the Okounkov body $\okbd_{X_\bullet}(W_\bullet)$ associated to $W_\bullet$ with respect to an admissible flag $X_\bullet$.
Now, let $W_\bullet:=W_\bullet(D|X_{n-k})$ be a graded linear series given by a divisor $D$ on $X$ restricted to $X_{n-k}$: for each $m>0$, we let
$$W_m=W_m(D|X_{n-k}):= \im\big[H^0\big(X, \mathcal{O}_X(\lfloor mD \rfloor)\big) \to H^0\big(X_{n-k}, \mathcal{O}_X(\lfloor mD \rfloor)|_{X_{n-k}}\big)\big].
$$
We may regard the partial admissible flag
$$
X_{n-k \bullet} : X_{n-k} \supseteq X_{n-k-1} \supseteq \cdots \supseteq X_{n-1}\supseteq X_n=\{x\}
$$
as an admissible flag on $X_{n-k}$ that is a $k$-dimensional projective variety.
We define the Okounkov body which we often regard as a subset of $\R^n_{\geq 0}$
$$
\okbd_{X_{n-k \bullet}}(D):=\okbd_{X_{n-k \bullet}}(W_\bullet) \subseteq \R^{k}_{\geq 0}\cong \{ 0 \}^{n-k} \times \R^{k}_{\geq 0} \subseteq \R^n_{\geq 0}.
$$
For more details, we refer to \cite{lm-nobody}, \cite{KK} and \cite{CHPW-okbd I}.

\begin{definition}
The \emph{Iitaka dimension of a divisor $D$} on $X$ is defined as
\[
\kappa(D):=\max\left\{k\in \Z_{\geq 0}\; \left|\; \limsup_{m\to \infty}\frac{h^0(X,\mathcal{O}_X(\lfloor mD\rfloor))}{m^k}>0\right.\right\}
\]
if $h^0(X,\mathcal{O}_X(\lfloor mD\rfloor))>0$ for some integer $m>0$ and $\kappa(D):=-\infty$ otherwise.
\end{definition}

\smallskip

\begin{lemma}[{\cite[Proposition 1]{Fujita}, \cite[Proposition 1.14]{Mori}}]\label{fujita}
Let $f \colon X \to Y$ be a surjective morphism with connected fibers between smooth projective varieties, $F$ a general fiber of $f$, and $D$ a divisor on $X$. Then there exists a big divisor $D_Y$ on $Y$ such that $\kappa(D - f^*D_Y) \geq 0$ if and only if $\kappa(D)=\dim Y + \kappa(D|_F)$.
\end{lemma}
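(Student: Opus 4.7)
The plan is to deduce both implications from Iitaka's easy addition theorem, which gives the unconditional inequality $\kappa(D) \le \dim Y + \kappa(D|_F)$, combined with an asymptotic comparison of section growth on $X$, on $Y$, and on a general fiber $F$. The equivalence then reduces to characterizing when equality holds in easy addition.

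For the forward direction, I would assume $D \sim_{\Q} f^* D_Y + R$ with $D_Y$ big and $R$ effective. Since $f^* D_Y$ is trivial along $F$, one has $D|_F \sim_{\Q} R|_F$, whence $\kappa(D|_F) = \kappa(R|_F) =: k$. The projection formula yields
\[
h^0(X, mD) \;\ge\; h^0\bigl(Y,\, \mathcal{O}_Y(mD_Y) \otimes f_* \mathcal{O}_X(mR)\bigr),
\]
and the pushforward $\mathcal{F}_m := f_* \mathcal{O}_X(mR)$ is torsion-free of generic rank $r_m = h^0(F, mR|_F)$, which grows like $m^{k}$ along a suitable subsequence. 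Using bigness of $D_Y$, an asymptotic estimate of the form $h^0(Y, \mathcal{O}_Y(mD_Y) \otimes \mathcal{F}_m) \gtrsim r_m \cdot m^{\dim Y}$ (obtained by embedding a twisted trivial subsheaf $\mathcal{O}_Y(-B)^{\oplus r_m} \hookrightarrow \mathcal{F}_m$ after suitable global generation, and applying bigness of $mD_Y - B$) yields $h^0(X, mD) \gtrsim m^{\dim Y + k}$, so $\kappa(D) \ge \dim Y + \kappa(D|_F)$.

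For the reverse direction, I would assume $\kappa(D) = \dim Y + \kappa(D|_F)$. For $m \gg 0$ divisible, the Iitaka map $\Phi_{|mD|}: X \dashrightarrow Z \subseteq \mathbb{P}^N$ has $\dim Z = \kappa(D)$. Its restriction to a general fiber $F$ has image of dimension $\kappa(D|_F)$, so the combined map $(f, \Phi_{|mD|}): X \dashrightarrow Y \times Z$ has image $W$ of dimension $\dim Y + \kappa(D|_F) = \dim Z$. The projection $W \to Z$ is thus generically finite, producing a dominant rational map $\pi: Z \dashrightarrow Y$. The big divisor $D_Y$ on $Y$ is then extracted by resolving $\pi$ to a morphism $\tilde\pi: \tilde Z \to Y$, intersecting the pullback of the ample hyperplane class on $Z$ with $\kappa(D|_F)$ general hyperplane sections to produce a subvariety generically finite over $Y$, and pushing forward to obtain a big effective $\Q$-divisor $D_Y$ on $Y$ whose pullback $f^* D_Y$ sits beneath a multiple of $D$.

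The main obstacle is the precise extraction of $D_Y$ in the reverse direction. While the heuristic that bigness on $Z$ should descend to bigness on $Y$ along the dominant map $\pi$ is transparent, its rigorous realization requires a careful Stein factorization, the choice of $m$ large enough for the Iitaka fibration to stabilize, and verification that the pushed-forward divisor is big rather than merely pseudoeffective. A secondary technical point in the forward direction is controlling the asymptotic growth uniformly in $m$, since the coherent sheaves $\mathcal{F}_m$ themselves vary with $m$.
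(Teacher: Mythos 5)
The paper does not prove this lemma; it is stated with citations to Fujita and to Mori, so there is no in-paper proof to compare against. Evaluating your sketch on its own merits, the overall strategy (easy addition gives $\kappa(D)\le\dim Y+\kappa(D|_F)$ unconditionally, so the content is in the two reductions) is correct, but there are real gaps in both directions.

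In the forward direction, the point you flag as a ``secondary technical point'' is in fact the crux, and your sketch does not resolve it. To deduce $h^0\bigl(Y,\mathcal{O}_Y(mD_Y)\otimes\mathcal{F}_m\bigr)\gtrsim r_m m^{\dim Y}$ you embed $\mathcal{O}_Y(-B)^{\oplus r_m}\hookrightarrow\mathcal{F}_m$ and appeal to bigness of $mD_Y-B$; but $B$ depends on $m$, and since $\mathcal{F}_m=f_*\mathcal{O}_X(mR)$ can have unbounded regularity as $m$ grows, there is no a priori control guaranteeing $h^0(mD_Y-B_m)\gtrsim m^{\dim Y}$. The standard way to circumvent this is to fix a single sufficiently large and divisible integer $p$ (so that $|pR|_F|$ realizes the Iitaka fibration of $R|_F$), fix once and for all an ample effective $B$ on $Y$ with $H^0(Y,\mathcal{F}_p(B))\twoheadrightarrow\mathcal{F}_p|_{Y_{\dim Y}}$, and then count linearly independent products $f^*(h^\alpha)\cdot\sigma_{j_1}\cdots\sigma_{j_q}$, where the $\sigma_j\in H^0(X,pR+f^*B)$ extend a basis of $H^0(F,pR|_F)$ and the $h^\alpha$ run over monomials of degree $\le t$ in local equations of $B$ through the base point; the number of independent such products is $\rho(q)\binom{t+\dim Y}{\dim Y}$, where $\rho(q)=\dim\im\bigl[S^qH^0(F,pR|_F)\to H^0(F,pqR|_F)\bigr]\gtrsim q^{\kappa(R|_F)}$ by the choice of $p$. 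This uses a \emph{fixed} degree $p$ and the subalgebra it generates, not the full sheaves $\mathcal{F}_m$, and that is what makes the twist uniform. Your sketch, as written, does not contain this idea.

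In the reverse direction, your dimension count showing $\dim W=\dim Z$ (so $W\to Z$ is generically finite) is correct, but the extraction of $D_Y$ is not. Intersecting the ample class on $Z$ with $\kappa(D|_F)$ general hyperplanes and pushing forward does produce an effective divisor on $Y$, but there is no visible reason why the pullback of that divisor should sit below a multiple of $D$; the intersection-theoretic pushforward and the divisorial inequality $\kappa(D-f^*D_Y)\ge0$ live on different footings, and the step is not justified. A cleaner and correct route: on a common resolution $\mu\colon X'\to X$ where $\Phi'\colon X'\to Z$ and $f'\colon X'\to Y$ are morphisms factoring through $(\Phi',f')\colon X'\to W'$ (a resolution of $W$), write $\mu^*(mD)\sim\Phi'^*A+E'$ with $A$ ample on $Z$ and $E'\ge0$ the fixed part. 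Since $p_Z\colon W'\to Z$ is generically finite, $p_Z^*A$ is big on $W'$; bigness is an open condition, so for any effective big $D_Y^0$ on $Y$ and small $\eps>0$, the class $p_Z^*A-\eps p_Y^*D_Y^0$ is still big on $W'$, hence has $\kappa\ge0$, and so does its pullback $\Phi'^*A-\eps f'^*D_Y^0$. Adding $E'\ge0$ gives $\kappa\bigl(\mu^*(mD)-\eps f'^*D_Y^0\bigr)\ge0$, i.e.\ $\kappa\bigl(D-f^*(\tfrac{\eps}{m}D_Y^0)\bigr)\ge0$ with $\tfrac{\eps}{m}D_Y^0$ big. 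This avoids the cycle-theoretic construction entirely.
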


Let us recall the definition of variation of a fibration.

\begin{definition}[{\cite[p.329]{V}}]
Let $f\colon X \to Y$ be a surjective morphism with connected fibers between smooth projective varieties. Let $\overline{\C(Y)}$ be the algebraic closure of the function field of $Y$. The \emph{variation of $f$}, denoted by $\var(f)$, is defined as the minimum of the transcendental degrees $\text{tr.}\deg_\C L$ of algebraically closed subfields $L\subseteq\overline{\C(Y)}$ such that $F\times_{\Spec (L)} \Spec (\overline{\C(Y)})$ is birational to $X\times_{Y} \Spec (\overline{\C(Y)})$ for some smooth projective variety $F$ over $L$.
\end{definition}

Note that the following statements are equivalent:
\begin{enumerate}
 \item $f$ is birationally isotrivial.
 \item $\var(f)=0$.
 \item there exists a generically finite cover $\tau \colon Y' \to Y$ such that the fiber product $X \times_Y Y'$ is birational to $Y' \times F$, where $F$ is a general fiber of $f$.
\end{enumerate}

\begin{theorem}[{\cite[Theorem 1.1]{K1}, \cite[Theorem 1.20]{V2}}]\label{var}
Let $f \colon X \to Y$ be a surjective morphism with connected fibers between smooth projective varieties, and $F$ a general fiber of $f$. Suppose that $F$ is a variety of general type. If $D_Y$ is any divisor on $Y$ with $\kappa(D_Y) \geq 0$, then we have
$$
\kappa(K_{X/Y} + f^*D_Y ) \geq \kappa(K_F) + \max\{ \kappa(D_Y), \var(f) \}.
$$
\end{theorem}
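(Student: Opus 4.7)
Since $F$ is of general type, we have $\kappa(K_F)=\dim F$, so the desired inequality reduces to
\[
\kappa(K_{X/Y}+f^*D_Y) \geq \dim F + \max\{\kappa(D_Y),\var(f)\}.
\]
My plan is to follow Viehweg and Kawamata and establish the two lower bounds $\dim F+\kappa(D_Y)$ and $\dim F+\var(f)$ \emph{separately}; the maximum of the two then follows automatically since both hold simultaneously. The central input is Viehweg's weak positivity theorem: for sufficiently divisible $m$, the relative pluricanonical sheaf $\mathcal{E}_m := f_*\omega_{X/Y}^{\otimes m}$ is nonzero, torsion-free, and weakly positive on $Y$, with rank $r_m$ of order $m^{\dim F}$; nonzero-ness and rank growth both rely on $F$ being of general type.

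For the bound $\kappa(K_{X/Y}+f^*D_Y) \geq \dim F + \kappa(D_Y)$, I would fix an effective $D_Y^{(N)} \in |ND_Y|$ (using $\kappa(D_Y)\geq 0$) and apply the standard consequence of weak positivity that $\hat{S}^{ak}\mathcal{E}_m \otimes \mathcal{O}_Y(kH)$ is generically globally generated for ample $H$ and $k$ sufficiently divisible. Absorbing the ample twist into large multiples of $D_Y^{(N)}$ along the Iitaka fibration of $D_Y$, via a Viehweg-type split-off argument over the $\kappa(D_Y)$-dimensional Iitaka base, and combining with the fiberwise $r_m$-many sections, yields $h^0(X,\mathcal{O}_X(t(K_{X/Y}+f^*D_Y))) \geq c\,t^{\dim F+\kappa(D_Y)}$ for some $c>0$. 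For the second bound $\kappa(K_{X/Y}+f^*D_Y) \geq \dim F + \var(f)$, I would invoke Viehweg's fiber-product trick: for $s\gg 0$ consider $f^{(s)}\colon X^{(s)} \to Y$ with $\mathcal{E}_m^{(s)} := f^{(s)}_*\omega_{X^{(s)}/Y}^{\otimes m}$. Viehweg's positivity theorem shows that $\det\mathcal{E}_m^{(s)}$ dominates the pullback of a big line bundle via the moduli map $\mu\colon Y \dashrightarrow M$ to the coarse moduli of canonically polarized manifolds, whose image has dimension $\var(f)$. Using the identity $K_{X^{(s)}/Y}=\sum_{i=1}^{s}\pi_i^*K_{X/Y}$, sections produced on $X^{(s)}$ descend to sections on $X$ of an appropriate multiple of $K_{X/Y}+f^*D_Y$, with $\kappa(D_Y)\geq 0$ once more serving to absorb the mandatory ample twist built into weak positivity.

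The principal obstacle is the second step: proving that, after a sufficiently high fiber product, the determinant of the relative pluricanonical direct image indeed reflects the geometric variation $\var(f)$. This is the hard moduli-theoretic content of the Viehweg--Kawamata circle of ideas and requires combining Hodge-theoretic positivity statements with the existence of a quasi-projective coarse moduli space of canonically polarized varieties, together with a careful control of how the resulting bigness on $Y$ pushes up to $X^{(s)}$ and then descends to $X$. By contrast, combining the two separate lower bounds via $\max$ rather than addition is automatic, since each bound holds on its own.
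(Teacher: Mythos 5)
The paper does not prove Theorem~\ref{var}; it is stated as a citation to Kawamata \cite{K1} and Viehweg \cite{V2}, so there is no in-paper proof to compare your proposal against. That said, your sketch is a faithful high-level outline of what Viehweg and Kawamata actually do: splitting the claim into the two lower bounds $\dim F + \kappa(D_Y)$ and $\dim F + \var(f)$; using $\kappa(K_F) = \dim F$; invoking weak positivity of $f_*\omega_{X/Y}^{\otimes m}$ together with rank growth of order $m^{\dim F}$; absorbing the mandatory ample twist via $\kappa(D_Y)\geq 0$; and, for the variation bound, passing to $s$-fold fiber products and using the bigness of $\det f^{(s)}_*\omega_{X^{(s)}/Y}^{\otimes m}$ relative to the moduli map. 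You are also correct that taking $\max$ costs nothing once both inequalities are in hand.

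Two caveats. First, what you have written is a roadmap rather than a proof: each of the three technical steps you flag — the split-off argument over the Iitaka base of $D_Y$, the moduli-theoretic bigness of the determinant sheaf controlling $\var(f)$, and the transfer of positivity back to $X$ — is a substantial portion of \cite{V}, \cite{V2}, \cite{K1} that your sketch defers to the literature. Since the paper under review treats the theorem as a black box too, that is not an inappropriate level of detail here, but it means your argument cannot stand alone. Second, the phrase ``sections produced on $X^{(s)}$ descend to sections on $X$'' mischaracterizes the mechanism slightly: sections do not descend from the fiber product. Rather, $\det\mathcal{E}_m^{(s)}$ is a line bundle on $Y$, and its bigness along the moduli image is used to upgrade the positivity of $\mathcal{E}_m$ itself on $Y$; one then obtains sections of $m(K_{X/Y}+f^*D_Y)$ on $X$ by the projection formula $H^0(X, mK_{X/Y}+mf^*D_Y) \cong H^0(Y, \mathcal{E}_m \otimes \mathcal{O}_Y(mD_Y))$. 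The passage is from $Y$ up to $X$, not from $X^{(s)}$ down to $X$.
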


We recall the following well known result with the sketch of the proof for the readers' convenience.
\begin{lemma}\label{lem-eff div exists}
Let $f\colon X\to Y$ be a surjective morphism with connected fibers between smooth projective varieties. Let $L$ be a Cartier divisor on $X$. Then there exists an effective divisor $B$ on $X$ such that $\codim_Yf(B)\geq2$ and $(f_*\mathcal{O}_X(L))^{\vee\vee}=f_*(\mathcal{O}_X(L+B))$.
\end{lemma}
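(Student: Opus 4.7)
The plan is to reduce the statement to a standard fact about reflexive sheaves on smooth varieties. Set $\mc{F}:=f_*\mc{O}_X(L)$; this is a torsion-free coherent sheaf on $Y$ since $X$ is integral. Because $Y$ is smooth, $\mc{F}$ is locally free on some open subset $U\subseteq Y$ with $\codim_Y(Y\setminus U)\geq 2$, so in particular $\mc{F}|_U$ is reflexive and coincides with $\mc{F}^{\vee\vee}|_U$. Since $\mc{F}^{\vee\vee}$ itself is reflexive (hence $S_2$) on a smooth variety, it is determined by its restriction to any open set whose complement has codimension $\geq 2$, giving
$$
\mc{F}^{\vee\vee}\;=\;j_*\bigl(\mc{F}|_U\bigr),\qquad j\colon U\hookrightarrow Y.
$$

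Next I would isolate the divisorial components of the preimage complement. Let $E_1,\dots,E_r$ be the prime divisors of $X$ contained in $X\setminus f^{-1}(U)$, and set $E:=E_1+\cdots+E_r$. By construction $f(E_i)\subseteq Y\setminus U$, so $\codim_Yf(E_i)\geq 2$ for each $i$. Writing $j'\colon f^{-1}(U)\hookrightarrow X$ for the open inclusion, the normality of $X$ ensures that a rational section of the line bundle $\mc{O}_X(L)$ regular on $f^{-1}(U)$ can only have poles along the $E_i$'s, and hence
$$
j'_*\bigl(\mc{O}_X(L)|_{f^{-1}(U)}\bigr)\;=\;\varinjlim_{m\to\infty}\mc{O}_X(L+mE).
$$

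To conclude I would combine these two descriptions via $f\circ j'=j\circ\widetilde{f}$, where $\widetilde{f}:=f|_{f^{-1}(U)}$, and use that $f_*$ commutes with filtered colimits of quasi-coherent sheaves on the noetherian scheme $X$:
$$
\mc{F}^{\vee\vee}\;=\;j_*\widetilde{f}_*\bigl(\mc{O}_X(L)|_{f^{-1}(U)}\bigr)\;=\;f_*j'_*\bigl(\mc{O}_X(L)|_{f^{-1}(U)}\bigr)\;=\;\varinjlim_{m\to\infty}f_*\mc{O}_X(L+mE).
$$
Since $\mc{F}^{\vee\vee}$ is coherent and $Y$ is noetherian, this filtered colimit must stabilize at some integer $m_0\geq 0$, and then $B:=m_0E$ is an effective divisor satisfying $(f_*\mc{O}_X(L))^{\vee\vee}=f_*\mc{O}_X(L+B)$ with $\codim_Yf(B)\geq 2$ by construction.

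The main obstacle in the plan, modest in nature, is justifying the colimit identification of $j'_*$: one must check that the non-divisorial (codimension $\geq 2$) components of $X\setminus f^{-1}(U)$ contribute nothing, which relies on normality of $X$ so that rational sections of a line bundle acquire no obstruction in codimension $\geq 2$. The remaining steps are formal and hinge only on smoothness of $Y$ and noetherianity.
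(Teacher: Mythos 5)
Your proof is correct and follows essentially the same route as the paper: restrict to the open set $U\subseteq Y$ (of codimension $\geq 2$ complement) where $f_*\mathcal O_X(L)$ is locally free, take multiples of the divisorial part of $X\setminus f^{-1}(U)$, and stabilize the resulting ascending chain against the coherent sheaf $(f_*\mathcal O_X(L))^{\vee\vee}$. You are somewhat more careful than the paper in two spots where it is terse — isolating only the codimension-one components $E_i$ of $X\setminus f^{-1}(U)$ rather than treating the whole complement as a divisor, and explicitly identifying the colimit $\varinjlim f_*\mathcal O_X(L+mE)$ with $(f_*\mathcal O_X(L))^{\vee\vee}$ via $j_*\tilde f_* = f_*j'_*$ — but the underlying idea is identical.
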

\begin{proof}
Let $U$ be the maximal open subvariety of $Y$ on which $f_*\mathcal{O}_X(L)$ is locally free and let $V:=f^{-1}(U)$. Since $f_*\mathcal{O}_X(L)$ is torsion-free, we have $\codim_Y(Y\setminus U)\geq2$. Let $D:=X\setminus V$. If $\codim_XD\geq 2$, then put $B=0$ and we are done. Otherwise, $D$ is a divisor on $X$ and we have
$$
\cdots \subseteq f_*\mathcal{O}_X(L+mD)\subseteq f_*\mathcal{O}_X(L+(m+1)D)\subseteq\cdots\subseteq (f_*\mathcal{O}_X(L))^{\vee\vee}.
$$
Since $(f_*\mathcal{O}_X(L))^{\vee\vee}$ is a coherent sheaf, this ascending chain must stabilize at some $m_0>0$, i.e.,
$$
f_*\mathcal{O}_X(L+mD)=f_*\mathcal{O}_X(L+(m+1)D)=\cdots=(f_*\mathcal{O}_X(L))^{\vee\vee}\;\;\text{for all}\;m\geq m_0.
$$
Now by putting $B=m_0D$, we are done.
\end{proof}

\section{Proofs of main results}\label{pfsec}

In this section, we prove the theorems stated in the introduction. The following is one of the main ingredients in the proof of Theorem \ref{main1}.

\begin{lemma}\label{weakpos}
Let $f \colon X \to Y$ be a surjective morphism with connected fibers between smooth projective varieties and $F$ a general fiber of $f$. Consider a big divisor $D$ on $X$ such that $D \sim_{\Q} f^*D_Y + R$ where $D_Y$ is a big divisor on $Y$ and $R$ is an effective divisor on $X$. Suppose that  $f_* \mathcal{O}_X(mR)$ is weakly positive for every sufficiently divisible integer $m>0$. Then we have
$$
 \vol_{X|F}(D)=\vol_{X|F}^+(R)=\vol_F(D|_F)=\vol_F(R|_F).
$$
\end{lemma}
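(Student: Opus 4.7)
The four volumes split into two cheap identifications plus one genuine lower bound that requires weak positivity. First, since $F=f^{-1}(y_0)$ is general we may arrange $y_0 \notin \Supp(D_Y)$, so $f^*D_Y|_F = 0$ and $D|_F \sim_\Q R|_F$, giving $\vol_F(D|_F)=\vol_F(R|_F)$. The standard upper bound $\vol_{X|V}(\cdot) \leq \vol_V((\cdot)|_V)$ (applied to $R+\epsilon A$ for $\vol^+$) then yields $\vol_{X|F}^+(R) \leq \vol_F(R|_F)$ and $\vol_{X|F}(D) \leq \vol_F(R|_F)$, so the entire lemma reduces to establishing, for some ample $A_Y$ on $Y$ and some $\delta>0$, the identity
$$
\vol_{X|F}(R + \delta f^*A_Y) = \vol_F(R|_F).
$$

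Granting this key identity, both missing lower bounds follow by comparison. Choose $\delta$ small enough that $\epsilon A - \delta f^*A_Y$ is ample on $X$; then $R + \epsilon A \sim_\Q (R + \delta f^*A_Y) + (\text{ample})$, and multiplying by an effective representative of the ample divisor gives $\vol_{X|F}(R + \epsilon A) \geq \vol_{X|F}(R + \delta f^*A_Y) = \vol_F(R|_F)$; letting $\epsilon \to 0$ yields $\vol_{X|F}^+(R) \geq \vol_F(R|_F)$. Similarly, writing $D \sim_\Q (R + \delta f^*A_Y) + f^*(D_Y - \delta A_Y)$ with $D_Y - \delta A_Y$ still big by bigness of $D_Y$, a sufficiently divisible multiple of $f^*(D_Y - \delta A_Y)$ is represented by an effective divisor disjoint from a general $F$, so multiplication gives $\vol_{X|F}(D) \geq \vol_{X|F}(R + \delta f^*A_Y) = \vol_F(R|_F)$.

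To prove the key identity I pass to Okounkov bodies on $F$. Set $V^\delta_\bullet := W_\bullet(R + \delta f^*A_Y \mid F)$ and fix any admissible flag $F_\bullet$ on $F$. Arranging also $y_0 \notin \Supp(A_Y)$, we have $f^*A_Y|_F = 0$ and hence $V^\delta_m \subseteq H^0(F, mR|_F)$, so $\Delta_{F_\bullet}(V^\delta_\bullet) \subseteq \Delta_{F_\bullet}(R|_F)$. The reverse inclusion reduces to a lifting claim: for each $s \in H^0(F, mR|_F)$ with $m$ sufficiently divisible, some power $s^N$ lies in $V^\delta_{Nm}$. Indeed, such a lift gives $\nu_{F_\bullet}(s)/m = \nu_{F_\bullet}(s^N)/(Nm) \in \Delta_{F_\bullet}(V^\delta_\bullet)$, and varying $s$ and $m$ produces a dense subset of $\Delta_{F_\bullet}(R|_F)$ by the very definition of the Okounkov body. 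The volume formula~(\ref{eq.volume formula}) then upgrades the Okounkov-body equality to the desired equality of volumes.

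The lifting claim is where the weak positivity hypothesis enters, and is the main obstacle. Apply the definition of weak positivity to $\mathcal{F}_m := f_*\mathcal{O}_X(mR)$ with $H = A_Y$ and a large auxiliary integer $m'$ chosen so that $m'm\delta > 1$: there exists $k$ such that $\hat{S}^{m'k}\mathcal{F}_m \otimes \mathcal{O}_Y(kA_Y)$ is globally generated at $y_0$. Hence $s^{m'k} \in (\hat{S}^{m'k}\mathcal{F}_m)_{y_0}$ lifts to a global section $\sigma$, and composing the multiplication map $\hat{S}^{m'k}\mathcal{F}_m \to (f_*\mathcal{O}_X(m'kmR))^{\vee\vee}$ with Lemma~\ref{lem-eff div exists}, which writes $(f_*\mathcal{O}_X(m'kmR))^{\vee\vee} = f_*\mathcal{O}_X(m'kmR + B)$ for an effective $B$ with $\codim_Y f(B) \geq 2$, converts $\sigma$ into $\tilde\sigma \in H^0(X, m'kmR + kf^*A_Y + B)$; since $B \cap F = \emptyset$, $\tilde\sigma|_F$ is a nonzero scalar multiple of $s^{m'k}$. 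The final step, where I expect the main difficulty, is to verify that $k(m'm\delta - 1)f^*A_Y - B$ is $\Q$-linearly equivalent to an effective divisor, so that $\tilde\sigma$ actually lies in $H^0(X, m'km(R + \delta f^*A_Y))$. Because $f^*A_Y$ is only nef on $X$ this requires delicate bookkeeping, most cleanly handled by absorbing $B$ into a marginally larger ample twist $\delta' A$ on $X$ using that $\delta' A - \delta f^*A_Y$ is ample, which converts the surplus into a genuinely ample divisor on $X$ and makes the effectivity automatic for $k \gg 0$, at the cost of a harmless perturbation $\delta \leadsto \delta'$.
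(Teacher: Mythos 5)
Your overall architecture differs from the paper's. The paper reduces to the inequality $\vol_{X|F}(R + \tfrac{1}{m}f^*H) \geq \vol_F(R|_F)$ and proves it by combining Fujita approximation on $R|_F$ with a dimension count through symmetric powers of $f'_*\mathcal{O}_{X'}(m_1R')$; you instead phrase the key step as an Okounkov-body equality $\Delta_{F_\bullet}(W_\bullet(R+\delta f^*A_Y\mid F)) = \Delta_{F_\bullet}(R|_F)$, established via a section-lifting claim. The reductions (bigness of $R|_F$ by Lemma~\ref{fujita}, the cheap upper bounds, descending from the key estimate to the $\vol_{X|F}^+$ and $\vol_{X|F}(D)$ bounds by twisting with an ample and with $f^*(D_Y-\delta A_Y)$) are all correct and close to the paper's. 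Two small logical points: you need the key estimate for \emph{arbitrarily small} $\delta$ (not ``some $\delta$''), since $\delta\to 0$ as $\epsilon\to 0$; and since you only ever use the $\geq$ direction, it is cleaner to state it as an inequality from the outset rather than an identity whose $\leq$ half would in any case fail once you replace $\delta f^*A_Y$ by $\delta' A$.

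The genuine gap is exactly where you flag it: controlling the divisor $B$ of Lemma~\ref{lem-eff div exists}. Applied directly on $X$, $B$ is an effective divisor with $\codim_Y f(B)\geq 2$ but of codimension one in $X$, and there is no reason why $k(m'm\delta-1)f^*A_Y - B$ should be $\Q$-effective: $f^*A_Y$ is only nef, its support is the pullback of a divisor on $Y$, while $B$ lies over a codimension $\geq 2$ locus, so the two classes are essentially unrelated and the difference may fail to be pseudoeffective. Your proposed fix --- absorbing $B$ into an ample twist $\delta'A$ on $X$ --- does not clearly close this: the divisor $B$ produced by Lemma~\ref{lem-eff div exists} depends on the sheaf $f_*\mathcal{O}_X(m'km R)$, hence on $k$, and may grow (via the $m_0$ in the proof of that lemma) as $k$ grows, so one cannot simply take $k\gg 0$ to make $k(m'm\delta'A - f^*A_Y) - B$ effective. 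The paper resolves this by invoking Viehweg's Lemma~7.3 of~\cite{V}: replace $f$ by $f'\colon X'\to Y'$, with $\tau'\colon X'\to X$ and $\tau\colon Y'\to Y$ birational, chosen so that every divisor on $X'$ lying over $\codim\geq 2$ in $Y'$ is $\tau'$-exceptional. Then $B$ is $\tau'$-exceptional and disappears: $H^0\bigl(X',\,\tau'^*(\cdot)+B\bigr) = H^0\bigl(X,\,\cdot\,\bigr)$. Without this preliminary birational modification (or an equivalent flattening argument) the lifting claim does not go through, so this is the missing ingredient your proof needs.
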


\begin{proof}
Note first that by Lemma \ref{fujita}, $D|_F=R|_F$ is big.
Since we have
$$
\vol_{X|F}^+(R) \leq \vol_F(R|_F)~~\text{and}~~\vol_{X|F}(D) \leq \vol_F(R|_F),
$$
it is enough to show the following inequalities
\begin{equation}\label{2inequalities}\tag{$\#$}
\vol_{X|F}^+(R) \geq \vol_F(R|_F) ~~\text{and}~~ \vol_{X|F}(D) \geq \vol_F(R|_F).
\end{equation}
To prove these inequalities, we claim the following:
\begin{equation}\label{claim*}\tag{$\star$}
\vol_{X|F}\left(R + \frac{1}{m}f^*H \right) \geq \vol_F(R|_F)
\end{equation}
for an ample divisor $H$ on $Y$ and for any sufficiently large integer $m>0$.

Assuming the claim (\ref{claim*}), we first prove the inequalities in (\ref{2inequalities}).
We recall that by \cite[Theorem 5.2]{elmnp-restricted vol and base loci}, $\vol_{X|F} \colon \text{Big}^F(X) \to \R_{\geq 0}$ is a continuous function where $\text{Big}^F(X)$ denotes the set of all $\R$-divisor classes $\xi$ such that $F$ is not properly contained in any irreducible component of $\bp(\xi)$.
By Lemma~\ref{fujita}, $R+ \frac{1}{m}f^*H$ is big for any ample divisor $H$. Note also that since $F$ is a general fiber of $f$, we may assume that $F \not\subseteq \bm(R)\ \text{and}\ F \not\subseteq \bp \left(R + \frac{1}{m}f^*H \right)$ for any $m>0$.
Thus by claim (\ref{claim*}), we obtain the first inequality of (\ref{2inequalities}):
$$
\vol_{X|F}^+(R)=\lim_{m \to \infty} \vol_{X|F}\left(R + \frac{1}{m}f^*H \right) \geq \vol_F(R|_F).
$$
On the other hand, using (\ref{claim*}) we have
$$
\vol_{X|F}\left(D + \frac{1}{m}f^*H \right) \geq \vol_{X|F}\left(R + \frac{1}{m}f^*H \right) \geq \vol_F(R|_F).
$$
Since $F$ is a general fiber, we may assume that
$F \not\subseteq \bp(D)\ \text{and}\ F \not\subseteq \bp \left(D + \frac{1}{m}f^*H \right)$.
Thus we obtain the second inequality of (\ref{2inequalities}):
$$
\vol_{X|F}(D)=\lim_{m \to \infty}\vol_{X|F}\left(D + \frac{1}{m}f^*H \right) \geq \vol_F(R|_F).
$$

Now it remains to prove the claim (\ref{claim*}).
For any $\epsilon >0$, by applying Fujita's approximation \cite[Theorem]{Fujita2} to $R|_F$, we can find a birational morphism $g \colon \widetilde{F} \to F$ and an ample divisor $L'$ on $\widetilde{F}$ such that $g^*(R|_F)-L'$ is effective and $\vol_{\widetilde{F}}(L') > \vol_{F}(R|_F) - \epsilon$. By assumption, $f_*\mathcal{O}_X(m_1R)$ is weakly positive for any sufficiently large and divisible $m_1>0$. Using this condition, we prove
\begin{equation}\label{claim**}\tag{${\star\star}$}
\vol_{X|F}(m_1R+f^*H)\geq\vol_{\widetilde{F}}(m_1L'),
\end{equation}
which will be used to obtain the claim (\ref{claim*}).

By \cite[Lemma 7.3]{V}, we have the following commutative diagram
\[
\begin{split}
\xymatrix{
X' \ar[d]_-{f'} \ar[r]^-{\tau'} & X \ar[d]^-{f} \\
Y' \ar[r]_-{\tau} & Y
}
\end{split}
\]
where $\tau \colon Y' \to Y$ is a birational morphism with $Y'$ smooth, $X'$ is a resolution of singularities of the main component of $X \times_Y Y'$, and $f' \colon X' \to Y'$ is the induced morphism such that every divisor $B'$ on $X'$ with $\codim f'(B') \geq 2$ is contained in the exceptional locus of $\tau' \colon X' \to X$.
Let $R':=\tau'^*R$. We may assume that $\tau$ is an isomorphism over a neighborhood of $f(F)$, so we may consider $F$ also as a general fiber of $f'$ so that $R'|_F=R|_F$.
For a sufficiently large and divisible integer $m_1>0$, the canonical map $\tau^*f_*\mathcal{O}_{X}(m_1R) \to f'_*\mathcal{O}_{X'}(m_1R')$ is surjective over some open subvariety $U$ of $Y'$. Thus $f'_*\mathcal{O}_{X'}(m_1R')$ is weakly positive on $Y'$. Let $H$ be an ample Cartier divisor on $Y$, and $H':=\tau^*H$. Then there is some integer $k>0$ such that $\hat{S}^k f'_*\mathcal{O}_{X'}(m_1R') \otimes \mathcal{O}_{Y'}(kH')$ is generated by global sections over some open subvariety of $Y'$.

For any integer $m_0 > 0$, we consider the natural map
$$
\varphi_{m_0} \colon \hat{S}^{m_0}\big(\hat{S}^k\big(f'_*\mathcal{O}_{X'}(m_1R') \big) \otimes \mathcal{O}_{Y'}(kH')\big) \to \big( f'_*\mathcal{O}_{X'}(m_0m_1 kR') \otimes \mathcal{O}_{Y'}(m_0kH') \big)^{\vee\vee}.
$$
By Lemma \ref{lem-eff div exists}, there exists an effective divisor $B$ on $X'$ such that $\codim f'(B) \geq 2$  and
$$
\big(f'_*\mathcal{O}_{X'}\big(m_0m_1 kR' + f'^*(m_0 k H') \big) \big)^{\vee\vee} = f'_*\mathcal{O}_{X'}\big(m_0m_1 kR' + f'^*(m_0 k H') +B \big).
$$
Note that $B$ is a $\tau'$-exceptional divisor. Thus we have
$$
H^0\big(Y', \big( f'_* \mathcal{O}_{X'}\big(m_0m_1 kR' + f'^*(m_0 k H') \big) \big)^{\vee\vee} \big)= H^0\big(X, \mathcal{O}_X \big( m_0m_1 kR + f^*(m_0kH) \big) \big).
$$
Since $m_1$ is sufficiently large, we may assume that $L:=m_1L'$ is a very ample Cartier divisor.
If $m_0>0$ is sufficiently large, then $S^{m_0}S^k H^0(\widetilde{F}, \mathcal{O}_{\widetilde{F}}(L)) \to H^0(\widetilde{F}, \mathcal{O}_{\widetilde{F}}(m_0 k L))$ is surjective. Consider the following commutative diagram
\[
\begin{split}
\xymatrix{
S^{m_0}S^kH^0(F, \mathcal{O}_F(m_1R|_F)) \ar[r] & H^0(F, \mathcal{O}_F(m_0m_1kR|_F)) \\
S^{m_0}S^kH^0(\widetilde{F}, \mathcal{O}_{\widetilde{F}}(L) )\ar@{^{(}->}[u] \ar@{->>}[r] & H^0(\widetilde{F}, \mathcal{O}_{\widetilde{F}}(m_0kL)).  \ar@{^{(}->}[u]
}
\end{split}
\]
Since $\rank \im( \varphi_{m_0}  ) = \dim \im \big[ S^{m_0}S^kH^0(F, \mathcal{O}_F(m_1R|_F)) \to H^0(F, \mathcal{O}_F(m_0m_1kR|_F))\big]$, we see that
$$
\rank \im( \varphi_{m_0}  )  \geq h^0(\widetilde{F}, \mathcal{O}_{\widetilde{F}}(m_0 kL)).
$$
Now, the generic global generation of $\hat{S}^k(f'_*\mathcal{O}_{X'}(m_1R')) \otimes \mathcal{O}_{Y'}(kH')$ implies the surjectivity of the map $\psi$ in the following commutative diagram
\[
\begin{split}
\xymatrix{
S^{m_0}S^kH^0(F,\mathcal{O}_F(m_1R|_F)) \ar[r] & H^0(F, \mathcal{O}_F(m_0m_1kR|_F)) \\
H^0(\hat{S}^{m_0}(\hat{S}^k(f'_*\mathcal{O}_{X'}(m_1R'))\otimes\mathcal{O}_{Y'}(kH'))\ar@{->>}[u]^{\psi} \ar[r] & H^0(X,\mathcal{O}_X(m_0m_1kR+f^*(m_0kH))).  \ar[u]
}
\end{split}
\]
Hence, we obtain
$$
\dim \im \big[ H^0\big(X, \mathcal{O}_X \big( m_0m_1 kR + f^*(m_0kH) \big) \big) \to H^0\big(F, \mathcal{O}_F(m_0m_1 k R|_F)\big) \big] \geq h^0\big(\widetilde{F}, \mathcal{O}_{\widetilde{F}}(m_0 kL)\big).
$$
This implies $\vol_{X|F}(m_1R+f^*H)\geq\vol_{\widetilde{F}}(L)$ and the inequality (\ref{claim**}) follows since $L=m_1L'$.
Using this inequality, we obtain
$$
 \vol_{X|F}\left( R + \frac{1}{m_1}f^*H \right) \geq \vol_{\widetilde{F}}\left( \frac{1}{m_1}L \right) = \vol_{\widetilde{F}}(L') > \vol_F(R|_F) - \epsilon.
$$
Since $\epsilon$ can be arbitrarily small and $m_1$ can be arbitrarily large, we obtain the claim (\ref{claim*}). This completes the proof.
\end{proof}

We are now ready to prove Theorem \ref{main1}.

\begin{proof}[Proof of Theorem \ref{main1}]
By Lemma \ref{fujita}, $R|_F=D|_F$ is a big divisor on a general fiber $F$ of $f \colon X \to Y$.
Let $Y_\bullet$ be an admissible flag on $Y$ such that $f^{-1}(Y_{\dim Y})=F$, and $F_\bullet$ an admissible flag on $F$. We now consider a fiber-type admissible flag $X_\bullet$ associated to $Y_\bullet$ on $Y$ and $F_\bullet$ on $F$. Recall that this is an admissible flag on $X$ such that
\[
X_i:=
\begin{cases}
f^{-1}(Y_i) &\text{if $0 \leq i\leq \dim Y$,}\\
F_{i-\dim Y} &\text{if $\dim Y < i\leq \dim X$.}\\
\end{cases}
\]
Take an ample divisor $A$ on $X$.
For $\eps > 0$, let $W_\bullet^{\eps}:=W_\bullet(R+\eps A|F)$ be the graded complete linear series of $R+\eps A$ on $X$ restricted to $F$.

Note that $\okbd_{X_\bullet}(f^*D_Y)=\okbd_{Y_\bullet}(D_Y)\times \{ 0 \}^{\dim F}$ and $\okbd_{X_\bullet}(R+\eps A) \supseteq \{ 0 \}^{\dim Y}\times \okbd_{F_\bullet}(W_\bullet^{\eps})$.
We thus have
$$\okbd_{Y_\bullet}(D_Y)\times \{0\}^{\dim F} + \{0\}^{\dim Y}\times\okbd_{F_\bullet}(W_\bullet^{\eps})=\okbd_{Y_\bullet}(D_Y) \times \okbd_{F_\bullet}(W_\bullet^{\eps})
$$
and
$$
\okbd_{X_\bullet}(f^*D_Y)+\okbd_{X_\bullet}(R+\eps A)
\supseteq\okbd_{Y_\bullet}(D_Y) \times \okbd_{F_\bullet}(W_\bullet^{\eps}).
$$
Since $D +\eps A \sim_{\Q} f^*D_Y + (R+\eps A)$, it follows from Remark \ref{additivity} and the above inclusion that
$$
\okbd_{X_\bullet}(D + \eps A) \supseteq \okbd_{X_\bullet}(f^*D_Y)+\okbd_{X_\bullet}(R+\eps A) \supseteq  \okbd_{Y_\bullet}(D_Y) \times \okbd_{F_\bullet}(W_\bullet^{\eps})
$$
and hence, we have
$$
\okbd_{X_\bullet}(D) = \bigcap_{\eps > 0} \okbd_{X_\bullet}(D+\eps A)
\supseteq\okbd_{X_\bullet}(f^*D_Y)+\bigcap_{\eps>0}\okbd_{X_\bullet}(R+\eps A)
\supseteq  \okbd_{Y_\bullet}(D_Y) \times \bigcap_{\eps > 0} \okbd_{F_\bullet}(W_\bullet^{\eps}).
$$
This implies that
$$
\vol_{\R^{\dim X}} (\okbd_{X_\bullet}(D)) \geq \vol_{\R^{\dim Y}}( \okbd_{Y_\bullet}(D_Y)) \cdot \vol_{\R^{\dim F}}\left(\bigcap_{\eps > 0} \okbd_{F_\bullet}(W_\bullet^{\eps}) \right).
$$

Since $F$ is a general fiber of $f \colon X \to Y$, we may assume that $F \not\subseteq \bm(R)$ and $F \not\subseteq \bp(R+\eps A)$ for any $\eps > 0$.
By \cite[(2.7) in p.804]{lm-nobody}, we know that
$$
\vol_{\R^{\dim F}} (\okbd_{F_\bullet}(W_\bullet^{\eps})) =  \frac{1}{\dim F!}\vol_{X|F}(R+\eps A).
$$
It thus follows that
$$
\vol_{\R^{\dim F}}\left(\bigcap_{\eps > 0} \okbd_{F_\bullet}(W_\bullet^{\eps}) \right) = \frac{1}{\dim F!}\vol_{X|F}^+(R).
$$
The first assertion of Theorem \ref{main1} now follows from \cite[Theorem A]{lm-nobody}.
If $f_*\mathcal{O}_X(mR)$ is weakly positive for every sufficiently divisible integer $m>0$, then by Lemma \ref{weakpos}, we have
$\vol_{X|F}^+(R)=   \vol_F(R|_F)$.
Thus by replacing $\vol_{X|F}^+(R)$ with $\vol_F(R|_F)$, we obtain the second assertion of Theorem~\ref{main1}.
\end{proof}

By Theorem \ref{main1} and Viehweg's theorem \cite[Theorem III]{V}, we obtain Corollary \ref{kawamata}.

\begin{proof}[Proof of Corollary \ref{kawamata}]
By the adjunction formula, we have $K_{X/Y}|_F = K_F$, and by Theorem \ref{var}, $K_{X/Y}$ is effective.
Recall that the coherent sheaf $f_* \omega_{X/Y}^{\otimes m}$ is weakly positive for every integer $m>0$ by Viehweg~\cite[Theorem III]{V}. Therefore, the assertion follows from Theorem \ref{main1}.
\end{proof}

Finally, we give a proof of Theorem \ref{main2}.

\begin{proof}[Proof of Theorem \ref{main2}]
\noindent $(1)\Rightarrow(2)$:
As in the proof of Theorem \ref{main1}, we consider a fiber-type admissible flag $X_\bullet$ on $X$ associated to admissible flags $Y_\bullet$ and $F_\bullet$ on $Y$ and $F$, respectively.
Take an ample divisor $A$ on $X$, and for $\eps > 0$, let $W_\bullet^{\eps}:=W_\bullet(K_{X/Y}+\eps A|F)$. Note that $\okbd_{F_\bullet}(W_\bullet^{\eps}) \supseteq \okbd_{F_\bullet}(K_F)$ for any $\eps > 0$. Recall from \cite[Theorem III]{V} that $f_* \omega_{X/Y}^{\otimes m}$ is weakly positive for every integer $m>0$. We have seen in the proof of Theorem \ref{main1} that
$\vol_{\R^{\dim F}}\left(\bigcap_{\eps > 0} \okbd_{F_\bullet}(W_\bullet^{\eps}) \right) = \vol_{\R^{\dim F}}(\okbd_{F_\bullet}(K_F))$, which implies that
$$
\bigcap_{\eps > 0} \okbd_{F_\bullet}(W_\bullet^{\eps}) = \okbd_{F_\bullet}(K_F).
$$
It is also shown in the proof of Theorem \ref{main1} that
$$
\okbd_{X_\bullet}(K_X) \supseteq
\okbd_{Y_\bullet}(K_Y) \times \okbd_{F_\bullet}(K_F) .
$$
If this inclusion is strict, then we have the strict inequality
\begin{align*}
\vol_{\R^{\dim X}}(\okbd_{X_\bullet}(K_X))&  >\vol_{\R^{\dim X}(\okbd_{Y_\bullet}(K_Y) \times \okbd_{F_\bullet}(K_F))}\\
&=\vol_{\R^{\dim Y}}(\okbd_{Y_\bullet}(K_Y))\cdot\vol_{\R^{\dim F}}(\okbd_{F_\bullet}(K_F)).
\end{align*}
By \cite[Theorem A]{lm-nobody}, we obtain a contradiction with the condition (1).

\smallskip

\noindent $(2)\Rightarrow(3)$:
Suppose that  $f$ is not birationally isotrivial, i.e., $\var(f) > 0$.
Then by Theorem \ref{var},
$$
\kappa(K_{X/Y}) \geq \kappa(K_F) + \var(f) > \kappa(K_F)=\dim F.
$$
As in the proof of $(1)\Rightarrow(2)$, we consider again a fiber-type admissible flag $X_\bullet$ on $X$ associated to admissible flags $Y_\bullet$ and $F_\bullet$ on $Y$ and $F$, respectively. Recall from the proof of Theorem \ref{main1} that we have
$$
\okbd_{X_\bullet}(K_X)\supseteq
\okbd_{Y_\bullet}(K_Y) + \bigcap_{\eps > 0}\okbd_{X_\bullet}(K_{X/Y}+\eps A) \supseteq
\okbd_{Y_\bullet}(K_Y) \times \okbd_{F_\bullet}(K_F).
$$

We claim that
$$
\dim \bigcap_{\eps > 0}\okbd_{X_\bullet}(K_{X/Y}+\eps A) > \dim \okbd_{F_\bullet}(K_F)(=\dim F).
$$
Note that $\bigcap_{\eps > 0}\okbd_{X_\bullet}(K_{X/Y}+\eps A) \supseteq \{0\}^{\dim Y}\times\okbd_{F_\bullet}(K_F)$.
To derive a contradiction, suppose that $\dim \bigcap_{\eps > 0}\okbd_{X_\bullet}(K_{X/Y}+\eps A)  = \dim F$. Then we must have 
\[\bigcap_{\eps > 0}\okbd_{X_\bullet}(K_{X/Y}+\eps A)  \subseteq \{ 0 \}^{\dim Y} \times \R^{\dim F}_{\geq 0}.\] Since $\bigcap_{\eps > 0}\okbd_{X_\bullet}(K_{X/Y}+\eps A) \supseteq  \okbd_{X_\bullet}(K_{X/Y})$, it follows that
$$\okbd_{X_\bullet}(K_{X/Y}) \subseteq \{ 0 \}^{\dim Y} \times \R^{\dim F}_{\geq 0}.$$
By the definition of the Okounkov body, this means that the support of any effective divisor linearly equivalent to $mK_{X/Y}$ for any $m \geq 1$ does not contain $F$. Thus we see that the natural restriction map
$$
H^0(X, \mathcal{O}_X(mK_{X/Y})) \longrightarrow H^0(F, \mathcal{O}_F(mK_F))
$$
is injective for every $m \geq 1$.
Therefore, we have $\kappa(K_{X/Y}) \leq \dim F$, which contradicts the condition $\kappa(K_{X/Y}) > \dim F$. We have thus shown the claim.

Now, by the claim, there exists a point $\mathbf x\in \bigcap_{\eps > 0}\okbd_{X_\bullet}(K_{X/Y}+\eps A)$ in $\R^{\dim X}$ such that $\mathbf x=\mathbf u+\mathbf u'$ for some $\mathbf u\in \{0\}^{\dim Y}\times\mathbb R^{\dim F}$ and non-zero $\mathbf u'\in\mathbb R^{\dim Y}\times\{0\}^{\dim F}$.
It is easy to see that the translation $\okbd_{Y_\bullet}(K_Y)+\mathbf x$ is not entirely contained in $\okbd_{Y_\bullet}(K_Y) \times \okbd_{F_\bullet}(K_F)$ since $\okbd_{Y_\bullet}(K_Y)+\mathbf u'\not\subseteq\okbd_{Y_\bullet}(K_Y)$ in $\mathbb R^{\dim Y}\times\{0\}^{\dim F}$. Therefore,
$$
\okbd_{Y_\bullet}(K_Y) + \bigcap_{\eps > 0}\okbd_{X_\bullet}(K_{X/Y}+\eps A)  \supsetneq \okbd_{Y_\bullet}(K_Y) \times \okbd_{F_\bullet}(K_F).
$$
Thus we obtain
$$
\okbd_{X_\bullet}(K_X)\supsetneq\okbd_{Y_\bullet}(K_Y)\times\okbd_{F_\bullet}(K_F)
$$
contradicting the condition (2).
\smallskip

\noindent $(3) \Rightarrow (1)$:
Suppose that $f \colon X \to Y$ is a birationally isotrivial fibration and $F$ is a general fiber.
Then there exist a generically finite cover $\tau \colon Y' \to Y$ with $Y'$ smooth and a surjective morphism $f' \colon X' \to Y'$ with connected fibers between smooth projective varieties such that $X'$ is birational to $F\times Y'$ and there is a generically finite surjective morphism $\tau' \colon X' \to X$. We may assume that there is a birational morphism $g \colon X' \to Y' \times F$. Denote the natural projections by $p_1 \colon F\times Y' \to F$ and $p_2 \colon F\times Y' \to Y'$. We obtain the following commutative diagram
\[
\begin{split}
\xymatrix{
 & F \times Y' \ar[dl]^-{p_1} \ar[dr]_-{p_2} & X' \ar[l]_-g \ar[r]^-{\tau'} \ar[d]^-{f'} & X \ar[d]^-f \\
 F & & Y' \ar[r]_-{\tau} & Y~.
}
\end{split}
\]
We can  write
$$
K_{X'}=\tau'^*K_X + R_{\tau'}~~\text{ and }~~ K_{Y'}=\tau^*K_Y + R_{\tau}
$$
where $R_{\tau'}$ and $R_{\tau}$ are effective ramification divisors of $\tau'$ and $\tau$, respectively. Note that
$
f'^* R_{\tau} \leq R_{\tau'}.
$
On the other hand, we have
$$
K_{X'} =g^*K_{F \times Y'} + E=f'^*K_{Y'} + g^*p_1^* K_F  + E = f'^*\tau^* K_Y + f'^*R_{\tau} + g^*p_1^* K_F  + E
$$
for some effective $g$-exceptional divisor $E$.
Thus we see that
$$
\tau'^*f^*K_Y + \tau'^*K_{X/Y}=\tau'^* K_X \leq f'^*\tau^*K_Y + g^* p_1^*K_F + E,
$$
and it gives us $\tau'^*K_{X/Y} \leq g^*p_1^*K_F + E$.
Since $(\tau'^*K_{X/Y})|_F = (\tau'|_F)^*(K_{X/Y}|_F) \cong K_F$, it follows that $g^* p_1^*K_F\leq\tau'^*K_{X/Y}$. Hence we obtain
$$
f'^*\tau^*K_Y + g^* p_1^*K_F \leq \tau'^* K_X \leq f'^*\tau^*K_Y + g^* p_1^*K_F + E.
$$
Since $f'^*\tau^*K_Y + g^* p_1^*K_F = g^*(p_1^*K_F + p_2^*\tau^*K_Y)$ and $E$ is $g$-exceptional, it follows that
$$
\frac{\vol_{X'}(\tau'^*K_X)}{\dim X!} = \frac{\vol_{Y'} (\tau^*K_Y)}{\dim Y!} \cdot \frac{\vol_F(K_F)}{\dim F!}.
$$
On the other hand,
$$
\vol_{X'}(\tau'^*K_X)  = \deg \tau' \cdot \vol_X(K_X)~~\text{ and }~~\vol_{Y'}(\tau^*K_Y)=\deg \tau \cdot \vol_Y(K_Y).
$$
Since $\deg \tau' = \deg \tau$, we finally obtain the desired condition (1).
This completes the proof.
\end{proof}

\begin{remark}
Our proof of $(2) \Rightarrow (3)$ in Theorem \ref{main2} also yields that if $f \colon X \to Y$ is an algebraic fiber space and both the base space $Y$ and a general fiber $F$ are of general type, then
$$
\text{$f$ is birationally isotrivial}~~ \Longleftrightarrow ~~\kappa(K_{X/Y})=\dim F.
$$
\end{remark}

\section{Examples}\label{exsec}

In this section, we present some examples. First, we show that Corollary \ref{kawamata} does not hold for the log setting under the assumptions of \cite[Conjecture 1.2]{Fujino}.

\begin{example}\label{ex0}
Let $X=\P(\mathcal{O}_{\P^1} \otimes \mathcal{O}_{\P^1}(-2))$ be the Hirzebruch surface with $(-2)$-curve $T$, and let $f \colon X \to C=\P^1$ be the canonical projection. Denote by $F$ a general fiber of $f$. Note that $-K_X \sim 2T+4F$ is nef and big.
Take an effective divisor $\Delta_C \sim_{\Q} -2K_C$ on $C$ with simple normal crossing support such that $(C, 2\Delta_C)$ is a klt pair. Note that $K_C + 2\Delta_C \sim_{\Q} -3K_C$ is ample. Since $2T + 4F$ is semiample, we can take a general irreducible smooth member $D_m \in |m(2T+4F)|$ for $m \gg 0$ such that $\Delta_X:=\frac{3}{2m}D_m + f^*\Delta_C$ is a simple normal crossing boundary divisor on $X$ and $(X, \Delta_X)$ is a klt pair.
One can easily check that
$$
K_X + \Delta_X \sim_{\Q} f^*(K_C + 2\Delta_C) + T.
$$
Furthermore, $K_X + \Delta_X \sim T+6F$ is ample and $(K_X + \Delta_X)|_F \sim_{\Q} T|_F$ is linearly equivalent to one point on $F\simeq \P^1$. Since $\text{Supp}(f^*2\Delta_C) \subseteq \text{Supp}(\Delta_X)$, the algebraic fiber space $f\colon X \to C$ and log pairs $(X, \Delta_X), (C, 2\Delta_C)$ satisfy the conditions in \cite[Conjecture 1.2]{Fujino}.
However, we have
$$
\vol_X(K_X + \Delta_X) =10, ~ \vol_{C}(K_C + 2\Delta_C)=6, \text{ and } \vol_F(K_F + \Delta_X|_F)=1
$$
so that we obtain
$$
\frac{1}{2}\vol_X(K_X + \Delta_X) < \vol_{C}(K_C + 2\Delta_C) \cdot \vol_F(K_F + \Delta_X|_F).
$$
\end{example}

\begin{remark}\label{Iitaka}
In Example \ref{ex0}, we also see that $f_*(\mathcal{O}_X(m((K_X+\Delta_X)-f^*(K_C + 2 \Delta_C))))$ is not weakly positive for any $m >0$.
On the other hand, by \cite[Theorem 4.13]{C}, if $f \colon X \to Y$ is an algebraic fiber space and $\Delta_X$ is a simple normal crossing effective divisor on $X$, then $f_*(\mathcal{O}_X(m(K_X + \Delta_X)-f^*K_Y))$ is weakly positive for any sufficiently divisible integer $m > 0$. In this setting, by Theorem \ref{main1}, we have
$$
\frac{\vol_X(K_X+\Delta_X)}{\dim X !} \geq \frac{\vol_Y (K_Y)}{\dim Y !} \cdot \frac{\vol_{F}(K_F+\Delta_X|_F)}{\dim F!}
$$
provided that $K_Y$ and $K_F+\Delta_X|_F$ are big.
Note that if $f \colon X \to Y$ is birationally isotrivial and $\Delta_X$ is ample, then this inequality can be strict (cf. Theorem \ref{main2}).
\end{remark}

The next two examples show that the equality in Theorem \ref{main1} is not equivalent to the birational isotriviality of the algebraic fiber space $f \colon X \to Y$ in general. In particular, we also see that Theorem \ref{main2} does not hold in the log setting.

\begin{example}\label{ex1}
Let $E$ be a rank $2$ vector bundle on a smooth projective curve $C$, and $X:=\P(E)$ the corresponding ruled surface with a tautological divisor $T$ on $X$ so that $\mathcal{O}_X(T)=\mathcal{O}_{\P(E)}(1)$.
Let $f \colon X \to C$ be the canonical projection, and $F$ a general fiber of $f$. Note that $f$ is an isotrivial fibration.
By tensoring $E$ with a suitable ample divisor, we may assume that the coherent sheaf $f_* \mathcal{O}_X(mT)$ is weakly positive for every integer $m > 0$ and $T^2 > 0$.
We can choose $a, b \in \Z_{>0}$ such that the divisor $D=aF + bT$ on $X$ is ample. We can write $aF = f^*D_C$ for some effective divisor $D_C$ on $C$ so that
$D=f^*D_C + bT$.
Then we have
$$
\vol_X(D)=2ab + b^2 T^2, ~~\vol_C(D_C)=a,~~\vol_F(T|_F) = b.
$$
This implies that
$$
\frac{\vol_X(D)}{2} > \vol_C(D_C) \cdot \vol_F(T|_F)
$$
even though $f$ is isotrivial.

Note that we can find $a, b \in \Z_{>0}$ such that $D - K_X$, $D_C - K_C$, and $bT|_F - K_F$ are ample simultaneously. Thus there are effective divisors $\Delta \sim_{\Q} D - K_X, \Delta_C \sim_{\Q} D_C - K_C$, and $\Delta_F \sim_{\Q} bT|_F - K_F$ such that $(X, \Delta), (C, \Delta_C)$, and $(F, \Delta_F)$ are klt pairs and $K_X+\Delta$, $K_C + \Delta_C$, $K_F + \Delta_F$ are big divisors. Even though
$K_X + \Delta \sim_{\Q} f^*(K_C + \Delta_C) + bT$, we have
\[
\frac{\vol_X(K_X + \Delta)}{2} > \vol_C(K_C + \Delta_C) \cdot \vol_F(K_F + \Delta_F).
\]
\end{example}

\begin{example}\label{ex2}
Let $X$ be a K3 surface admitting two distinct elliptic fibrations $f_i \colon X \to \P^1$ with $F_i$ a general fiber for $i=1 \ \text{and}\ 2$. We have $F_1 \cdot F_2 > 0$. We assume that $f_1$ is not birationally isotrivial.
It is easy to see that the coherent sheaf $f_{1 *}\mathcal{O}_X(mF_2)$ is weakly positive for every integer $m > 0$.
Consider a divisor $D=aF_1 + b F_2$ on $X$ for $a, b \in \Q_{>0}$. We can write $aF_1 = f_1^* D_1$ for some divisor $D_1$ on $\P^1$ so that
$D=f_1^*D_1 + bF_2$.
Then we have
$$
\vol_X(D)=2ab(F_1 \cdot F_2), ~~ \vol_{\P^1}(D_1)=a, \text{ and } \vol_{F_1}(bF_2|_{F_1}) = b (F_1 \cdot F_2).
$$
This shows that
$$
\frac{\vol_X(D)}{2} = \vol_{\P^1}(D_1) \cdot \vol_{F_1}(bF_2|_{F_1})
$$
even though $f_1$ is not birationally isotrivial.

As in the previous example, we can choose $a, b \in \Z_{>0}$ such that $D=D-K_X$, $D_1 - K_{\P^1}$, and $bF_2|_{F_1} - K_{F_1}$ are ample simultaneously. Thus there are effective divisors $\Delta\sim_{\Q} D$, $\Delta_1 \sim_{\Q} D_1 - K_{\P^1}$ and $\Delta_2 \sim_{\Q} bF_2|_{F_1} - K_{F_1}$ such that $(X,\Delta)$, $(\P^1, \Delta_1)$, and $(F_1, \Delta_2)$ are klt pairs and $K_X + \Delta$, $K_{\P^1}+\Delta_1$, and $K_{F_1}+\Delta_2$ are big divisors. Even though
$K_X + D \sim_{\Q} f_1^*(K_{\P^1} + \Delta_1) + b F_2$,
the following equality holds:
$$
\frac{\vol_X(K_X + D)}{2} = \vol_{\P^1}(K_{\P^1}+\Delta_1) \cdot \vol_{F_1}(K_{F_1} + \Delta_2).
$$
\end{example}

\end{document}